\DeclarePairedDelimiter{\floor}{\lfloor}{\rfloor}
\newcommand{\boxWithLabel}[4]
{
	\draw (#2,#3) -- (#2-#1,#3) -- (#2-#1,#3-#1) -- (#2,#3-#1) -- (#2,#3);
	\node (a) at (#2-0.5*#1,#3-0.5*#1) {#4};
}
\newtheorem{theorem}{Theorem}[section]
\newtheorem{corollary}[theorem]{Corollary}
\newtheorem{lemma}[theorem]{Lemma}
\theoremstyle{definition}
\theoremstyle{conjecture}
\begin{document}

\title{Using Periodicity to Obtain Partition Congruences}

\author{ Ali H. Al-Saedi}
\address{Ali H. Al-Saedi}
\address{Oregon State University, Corvallis, OR 97331, USA}
\email{alsaedia@math.oregonstate.edu}

\begin{abstract}
In this paper, we generalize recent work of  Mizuhara, Sellers, and Swisher that gives a method for establishing  restricted plane partition congruences based on a bounded number of calculations. Using periodicity for partition functions, our extended technique could be a useful tool to prove congruences for certain types of combinatorial functions based on a bounded number of calculations. As applications of our result, we establish  new and existing restricted plane partition congruences, restricted plane overpartition congruences and several examples of restricted partition congruences. 
\end{abstract}

\keywords{Periodicity, partitions, overpartitions, plane partitions, plane overpartitions}

\subjclass[2010]{11P83}

\maketitle


\section{\textbf{Introduction and Statements of Results}}
\subsection{Partitions and plane partitions}
A {\it{partition}} of a positive integer $n$ is a non-increasing sequence of positive integers that sum to $n$. The total number of partitions of $n$ is denoted by $p(n)$. We can define $p(n)$ on the set of all integers by setting $p(0)=1$ and $p(n)=0$ for all $n<0$. One can also consider partitions where the parts are restricted to a specific set $S$ of integers. For example, let $S$ be the set of positive integers, then $p(n;S)$ denotes the number of partitions of $n$ into parts from S. Clearly $p(n)=p(n;\mathbb{N}).$ 

For example, the partitions of $n=5$ are
$$5, 4+1, 3+2, 3+1+1, 2+2+1, 2+1+1+1, 1+1+1+1+1.$$
Thus $p(5)=7$ and if $S$ is the set of odd integers, then $p(5;S)=3.$

We write $|\lambda|=|(\lambda_{1},\lambda_{2},\cdots,\lambda_{k})| = n$  to indicate that $\lambda=(\lambda_{1},\lambda_{2},\cdots,\lambda_{k})$ is a partition of $n$. A {\it{Ferrers-Young}} diagram of a partition $\lambda$ of $n$ is a left-justified rectangular array of $n$ boxes, or cells, with a row of length $\lambda_{j}$ for each part $\lambda_{j}$ of $\lambda$. For example, the Ferrers-Young diagram of $\lambda=(6,4,3,1)$ is as follows.
\begin{equation}\nonumber
\begin{tikzpicture}[scale=1]
	
		\boxWithLabel{0.5}{0}{4}{}
		\boxWithLabel{0.5}{0.5}{4}{}
		\boxWithLabel{0.5}{1}{4}{}
		\boxWithLabel{0.5}{1.5}{4}{}
		\boxWithLabel{0.5}{2}{4}{}
		\boxWithLabel{0.5}{2.5}{4}{}
		
		\boxWithLabel{0.5}{0}{3.5}{}
		\boxWithLabel{0.5}{0.5}{3.5}{}
		\boxWithLabel{0.5}{1}{3.5}{}
		\boxWithLabel{0.5}{1.5}{3.5}{}
		
		\boxWithLabel{0.5}{0}{3}{}
		\boxWithLabel{0.5}{0.5}{3}{}
		\boxWithLabel{0.5}{1}{3}{}
		
		\boxWithLabel{0.5}{0}{2.5}{}
				
	\end{tikzpicture}
\end{equation}

Ramanujan's beautiful partition congruences \cite{ramanujan1921congruence}, which state that for all $n\geq 0$,
\begin{align*}
p(5n+4)  & \equiv 0 \pmod{5} \\
p(7n+5)  & \equiv 0 \pmod{7} \\
p(11n+6) & \equiv 0 \pmod{11}
\end{align*}
have inspired a vast number of mathematicians to study and investigate special arithmetic properties of partitions, as well as interesting restricted partition functions and generalizations such as overpartitions and plane partitions. For example see \cite{andrews},\cite{andrewsandbruce},\cite{andrews1988dyson},\cite{atkin1954},\cite{bruce},\cite{bringmann2010dyson},\cite{corteel2004overpartitions},\cite{garvan1988}, \cite{lovejoy2008rank}, \cite{ono2000distribution} and \cite{rushforth} to mention a few. 

The generating function for $p(n)$ is due to Euler and is given by

\begin{equation}\label{eq1}
\sum_{n=0}^{\infty} p(n)q^{n}=\prod_{n=1}^{\infty} \frac{1}{1-q^n}.
\end{equation}

We can also consider partitions where parts are from a multiset $S$ such that each repeated number is treated independently.

For example, consider the multiset
$$S=\{1_{1},1_{2},2_{1},2_{2},2_{3},3\},$$ where repeated numbers have different indices. Then $p(2;S)=6$ since the partitions of $2$ with parts from $S$ are
$$2_{1},2_{2},2_{3},1_{1}+1_{1},1_{1}+1_{2},1_{2}+1_{2}.$$ 
Note that the order in the multiset gives an implied order to the repeated numbers.

Each partition can be considered as a one dimensional array of parts, and MacMahon \cite{andrews}  extended this idea to a two-dimensional array. A {\it{plane partition}} $\lambda$ of a positive integer $n$ is a two-dimensional array of positive integers $n_{i,j}$ that sum to $n$, such that the array is the Ferrers diagram of a partition, and the entries are non-increasing from left to right and also from top to bottom. Letting $i$ denote the row and $j$ the column of $n_{i,j},$ this means that for all $i, j\geq 0$,
\begin{align*}
n_{i,j}\geq n_{i+1,j},\\
n_{i,j}\geq n_{i,j+1}.
\end{align*}
Correspondingly, the entries $n_{i,j}$ are called the parts of $\lambda,$ and the number of plane partitions of $n$ is denoted by $pl(n).$ For example, the plane partitions for $n=3$ are as follows. \\
\begin{equation}\nonumber
\begin{tikzpicture}[scale=1]
	
		\boxWithLabel{0.5}{0}{4}{3}
				
		\boxWithLabel{0.5}{1}{4}{2}
		\boxWithLabel{0.5}{1.5}{4}{1}

		\boxWithLabel{0.5}{2.5}{4}{1}
		\boxWithLabel{0.5}{3}{4}{1}
		\boxWithLabel{0.5}{3.5}{4}{1}

		\boxWithLabel{0.5}{1}{3}{2}
		\boxWithLabel{0.5}{1}{2.5}{1}

		\boxWithLabel{0.5}{2.5}{3}{1}
		\boxWithLabel{0.5}{3}{3}{1}
		\boxWithLabel{0.5}{2.5}{2.5}{1}
		
		\boxWithLabel{0.5}{2.5}{1.5}{1}
		\boxWithLabel{0.5}{2.5}{1}{1}
		\boxWithLabel{0.5}{2.5}{0.5}{1}
		
	\end{tikzpicture}
\end{equation}
Thus, $pl(3)=6.$

MacMahon's challenge was to establish a nice generating function for $pl(n)$. However, it was not easy, it took him nearly twenty years \cite{andrews},\cite{macmahon2001combinatory} to prove that

\begin{equation}\label{eq2}
PL(q)=\sum_{n=0}^{\infty}{pl(n)q^{n}}=\prod_{n=1}^{\infty} \frac{1}{(1-q^n)^{n}}.
\end{equation}
He also considered a restricted form of plane partition that must have at most $r$ rows and $c$ columns. The generating function is given by 

\begin{equation}\label{eq3}
PL_{r,c}(q)=\sum_{n=0}^{\infty} pl_{r,c}(n)q^{n}=\prod_{i=1}^{r}\prod_{j=1}^{c} \frac{1}{1-q^{i+j-1}},
\end{equation}
where $pl_{r,c}(n)$ denotes the number of plane partitions of $n$ with at most $r$
rows and $c$ columns. By fixing $r$ and letting $c\longrightarrow \infty$, we obtain the generating function for {\it{r-rowed plane partitions}}, which are plane partitions with at most $r$ rows. The generating function is given by

\begin{equation}\label{eq4}
PL_{r}(q)=\sum_{n=0}^{\infty} pl_{r}(n)q^{n}=\prod_{i=1}^{r}\prod_{j=1}^{\infty} \frac{1}{1-q^{i+j-1}}=\prod_{n=1}^{\infty} \frac{1}{(1-q^n)^{min\{r,n\}}},
\end{equation}
where $pl_{r}(n)$ denotes the number of $r$-rowed plane partitions of $n$.

\subsection{Periodicity and plane partition congruences}
The goal of this paper is to generalize a result of Mizuhara, Sellers, and Swisher \cite{periodic} which uses periodicity to study plane partition congruences. Kwong and others have done extensive studies on the periodicity of certain rational functions, including partition generating functions, for example see  \cite{kwong3}, \cite{kwong1}, \cite{kwong2}, \cite{newman}, and \cite{nijenhuis1987}. Before we state the result in \cite{periodic}, we need a couple of definitions.

Let $$A(q)=\sum_{n=0}^{\infty}{a_{n}q^{n}} \in \mathbb{Z}[[q]]$$ be a formal power series with integer coefficients, and let $d, \ell$ be positive integers. We say $A(q)$ is {\it{purely periodic}} with period $d$ modulo $\ell$ if, for all $n\geq 0$,
$$a_{n+d}\equiv a_{n}\pmod{\ell}.$$
The smallest such period for $A(q)$, denoted $\pi_{\ell}(A)$, is called the {\it{minimal period of}} $A(q)$ modulo $\ell$. In this paper, periodic  means purely periodic. 

Mizuhara, Sellers, and  Swisher \cite{periodic} consider the class of plane partition congruences of the form      
\begin{equation}\label{eqth1}
\sum_{i=1}^{s}{pl_{\ell}(n\ell+a_{i})}\equiv \sum_{j=1}^{t}{pl_{\ell}(n \ell +b_{i})}\pmod{\ell},\;\;\;
\mbox{for all}\;\; n\geq 0.
\end{equation}

\begin{theorem}[Mizuhara, Sellers, Swisher \cite{periodic}]{} \label{th1}
Fix positive integers $s, t$ and nonnegative integers $0\leq a_{i},b_{j}\leq \ell-1$ for each $1 \leq i \leq s, 1 \leq j \leq t$. For a prime $\ell$, if
\begin{equation}\nonumber
\sum_{i=1}^{s}{pl_{\ell}(n\ell+a_{i})}\equiv \sum_{j=1}^{t}{pl_{\ell}(n \ell +b_{i})}\pmod{\ell}
\end{equation}
holds for all $n <\pi_{\ell}(F_{\ell})/\ell$, then it holds for all $n \geq 0,$ where 
$$F_{\ell}(q):=\prod_{n=1}^{\ell-1}\frac{1}{(1-q^n)^n}. $$
\end{theorem}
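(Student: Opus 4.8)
The plan is to work entirely modulo $\ell$ and reduce the whole question about $pl_\ell$ to a statement about the finite product $F_\ell(q)$, using the Frobenius endomorphism of $\mathbb{F}_\ell[[q]]$. First I would split the generating function \eqref{eq4} at $n=\ell$: since $\min\{\ell,n\}=n$ for $n<\ell$ and $\min\{\ell,n\}=\ell$ for $n\ge \ell$, one has $PL_\ell(q)=F_\ell(q)\cdot G(q)^\ell$ where $G(q)=\prod_{n\ge \ell}(1-q^n)^{-1}\in\mathbb{Z}[[q]]$. Because $\ell$ is prime, $x\mapsto x^\ell$ is a ring homomorphism on $\mathbb{F}_\ell[[q]]$ and $a^\ell\equiv a\pmod\ell$, so $G(q)^\ell\equiv G(q^\ell)\pmod\ell$ and hence
\[
PL_\ell(q)\equiv F_\ell(q)\,G(q^\ell)\pmod{\ell}.
\]
The decisive structural feature is that $G(q^\ell)$ is a power series in $q^\ell$ with constant term $1$.

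Next I would extract residue classes modulo $\ell$. Writing $F_\ell(q)=\sum_m f_m q^m$ and $G(q^\ell)=\sum_k h_k q^{k\ell}$, the coefficient of $q^{n\ell+a}$ on the right pairs only the $a$-th arithmetic section of $F_\ell$ with $G$, giving $pl_\ell(n\ell+a)\equiv\sum_{k}f_{(n-k)\ell+a}\,h_k\pmod\ell$. Passing to the compressed variable $x$, with $\hat F_a(x)=\sum_n f_{n\ell+a}x^n$ and $H(x)=\sum_k h_k x^k$, this becomes $\sum_n pl_\ell(n\ell+a)x^n\equiv \hat F_a(x)H(x)$. Summing over the $a_i$ and $b_j$ in \eqref{eqth1} and setting $C(n)$ equal to the difference of the two sides there and $E(n):=\sum_{i}f_{n\ell+a_i}-\sum_{j}f_{n\ell+b_j}$, I obtain
\[
\sum_{n\ge 0}C(n)\,x^n\equiv\Big(\sum_{n\ge 0}E(n)\,x^n\Big)H(x)\pmod{\ell}.
\]
Since $H(x)$ has constant term $1$ it is a unit in $\mathbb{F}_\ell[[x]]$, and the relation $C(n)=\sum_{k=0}^{n}E(k)h_{n-k}$ is lower-triangular with unit diagonal; hence for every $N$, $C(0)=\cdots=C(N)=0$ if and only if $E(0)=\cdots=E(N)=0$. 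In particular the hypothesis forces $E(n)\equiv 0$ for all $n<\pi_\ell(F_\ell)/\ell$, and it suffices to upgrade this to $E\equiv 0$ for all $n$, whence $C\equiv 0$.

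The upgrade is where periodicity enters. Since $F_\ell$ is purely periodic modulo $\ell$ with minimal period $\pi:=\pi_\ell(F_\ell)$, each section $n\mapsto f_{n\ell+c}$, and therefore $E$, is purely periodic with period dividing $\pi/\gcd(\pi,\ell)$. Thus the interval $0\le n<\pi/\ell$ is exactly one full period of $E$ precisely when $\ell\mid\pi$, in which case the vanishing of $E$ on $[0,\pi/\ell)$ propagates to all $n\ge 0$, completing the argument.

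The main obstacle is therefore establishing $\ell\mid\pi_\ell(F_\ell)$, which is exactly what makes the stated bound $\pi_\ell(F_\ell)/\ell$ an integer and a genuine full period of the sampled sequence. I would prove it from the characterization that $F_\ell$ has period $\pi$ modulo $\ell$ if and only if $\prod_{n=1}^{\ell-1}(1-q^n)^n$ divides $1-q^{\pi}$ in $\mathbb{F}_\ell[q]$. Writing $\pi=\ell^{v}\pi'$ with $\gcd(\pi',\ell)=1$, over $\mathbb{F}_\ell$ one has $1-q^{\pi}=(1-q^{\pi'})^{\ell^{v}}$ with $1-q^{\pi'}$ separable, so the divisibility forces the multiplicity of the cyclotomic factor $1-q$ on the left to be at most $\ell^{v}$. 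That multiplicity is $\sum_{n=1}^{\ell-1}n=\tfrac{\ell(\ell-1)}{2}$, so $\ell^{v}\ge\tfrac{\ell(\ell-1)}{2}\ge\ell$ for every prime $\ell\ge 3$, giving $v\ge 1$ and hence $\ell\mid\pi$ (the case $\ell=2$ being degenerate, as $F_2=(1-q)^{-1}$ has period $1$). This divisibility, combined with the unit inversion against $H$, is the crux; the remaining Frobenius reduction and residue bookkeeping are routine.
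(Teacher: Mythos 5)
Your argument is correct, and its core engine is the same as the paper's: the paper obtains Theorem \ref{th1} as the special case $N=1$, $\delta=\ell$, $A=F_\ell$ of Theorem \ref{thA} (via Lemma \ref{lemma2}), and the proof of Theorem \ref{thA} likewise factors the generating function as $A(q)$ times a series supported on multiples of $\ell$ with unit constant term, transfers initial-segment vanishing between the $\lambda$-sections and the $\alpha$-sections through the unit lower-triangular convolution (your inversion of $H(x)$ in $\mathbb{F}_\ell[[x]]$), and then propagates by periodicity of $A$. Where you genuinely diverge is in handling the divisibility $\ell \mid \pi_\ell(F_\ell)$, which is precisely what makes $0\le n<\pi_\ell(F_\ell)/\ell$ a full period of the sampled sequence $E$: in the paper this divisibility is a standing hypothesis of Theorem \ref{thA} (the minimal period is assumed to equal $\delta K$), and in applications it is supplied by Kwong's theorem (Theorem \ref{kwong}, Corollary \ref{corR}), which gives the exact period $\ell^{b_\ell(S)}m_\ell(S)$ with $b_\ell(S)\ge 1$ for $\ell\ge 3$. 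You instead prove it from scratch, using the characterization that pure periodicity of $1/P(q)$ with period $d$ modulo $\ell$ is equivalent to $P(q)\mid 1-q^d$ in $\mathbb{F}_\ell[q]$, and comparing the multiplicity $\ell(\ell-1)/2$ of $(1-q)$ in $P(q)=\prod_{n=1}^{\ell-1}(1-q^n)^n$ against its multiplicity $\ell^v$ in $1-q^\pi=(1-q^{\pi'})^{\ell^v}$, where $\pi=\ell^v\pi'$ with $\ell\nmid\pi'$. This buys self-containedness---no appeal to Kwong's theorem---at the price of yielding only the divisibility rather than the exact period, which the paper's explicit finite verifications require anyway; similarly, your Frobenius step $G(q)^\ell\equiv G(q^\ell)\pmod{\ell}$ is an equivalent, slightly cleaner substitute for Lemma \ref{lemmaL}. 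One loose end: you dismiss $\ell=2$ as ``degenerate,'' but since $2\nmid\pi_2(F_2)=1$ your full-period criterion does not literally apply there; the case still follows from your own machinery---the sections $E(n)$ are constant because $\pi_2(F_2)=1$, so the single hypothesis $C(0)\equiv 0$ forces $E\equiv 0$ and hence $C\equiv 0$---and that one sentence should be said rather than waved at.
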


Theorem \ref{th1} states that for a prime $\ell$, one can look only at the finite set of $\ell$-rowed plane partition numbers $pl_{\ell}(\ell n+a_{i})$ and their finite sums for $0\leq n<\pi_{\ell}(F_{\ell})/\ell$ to see if there is a congruence of the form (\ref{eqth1}) that holds for all $n$. For example, taking $\ell=2$, if there is a congruence, then it is one of the following possible choices  
$$pl_{2}(2n)\equiv 0\pmod{2}$$
$$pl_{2}(2n+1)\equiv 0\pmod{2}$$
$$pl_{2}(2n)\equiv pl_{2}(2n+1)\pmod{2}.$$
If any of the congruences above holds for $0\leq n<\pi_{2}(F_{2})/2$, then it holds for all $n\geq 0$. The reason this technique works is because $F_{\ell}(q)$ is periodic, which is due to a theorem of Kwong \cite{kwong1} (see Theorem \ref{kwong}).

Theorem \ref{th1} was used to prove several plane partition congruences, some previously known to Gandhi \cite{gandhi} and others previously unknown.  

\begin{theorem}[\cite{periodic}]{} \label{th2}
The following hold for all $n \geq 0$,
\begin{align}\label{g1}
pl_{2}(2n + 1) \equiv pl_{2}(2n) \pmod{2}
\end{align}
\begin{align}\label{g2}
pl_{3}(3n + 2) \equiv 0 \pmod{3}
\end{align}
\begin{align}\label{g3}
pl_{3}(3n + 1) \equiv pl_{3}(3n) \pmod{3}
\end{align}
\begin{align}\label{g4}
pl_{5}(5n + 2) \equiv pl_{5}(5n+4) \pmod{5}
\end{align}
\begin{align}\label{g5}
pl_{5}(5n + 1) \equiv pl_{3}(5n+3) \pmod{5}
\end{align}
\begin{align}\label{g6}
pl_{7}(7n + 2)+ pl_{7}(7n+3)\equiv pl_{7}(7n + 4)+ pl_{7}(7n+5) \pmod{7}.
\end{align}

\end{theorem}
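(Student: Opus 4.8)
The plan is to derive each of the six congruences as a direct application of Theorem \ref{th1}. For a fixed prime $\ell$, that theorem tells us that a candidate congruence of the form (\ref{eqth1}) holds for all $n \geq 0$ as soon as it is verified for the finitely many $n$ with $0 \le n < \pi_{\ell}(F_{\ell})/\ell$. Thus each part of the theorem reduces to two separate tasks: (i) determine the minimal period $\pi_{\ell}(F_{\ell})$ of $F_{\ell}(q) = \prod_{n=1}^{\ell-1}(1-q^n)^{-n}$ modulo $\ell$, and (ii) compute the relevant values $pl_{\ell}(\ell n + a_i)$ and $pl_{\ell}(\ell n + b_j)$ for all $n$ below that bound and confirm the asserted equality of sums modulo $\ell$.

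First I would handle the period computation. Kwong's theorem (Theorem \ref{kwong}) gives the minimal period of a generating function of the shape $\prod (1-q^n)^{-c_n}$ modulo a prime power, so applying it to $F_{\ell}$ yields $\pi_{\ell}(F_{\ell})$ for each of $\ell = 2,3,5,7$. For $\ell = 2$ this is immediate, since $F_{2}(q) = (1-q)^{-1} = \sum_{n \geq 0} q^{n}$ has all coefficients equal to $1$, giving $\pi_{2}(F_{2}) = 1$ and hence only the single case $n = 0$ to check for (\ref{g1}). For $\ell = 3,5,7$ the factor structure of $F_{\ell}$ is richer, but Kwong's formula still produces an explicit (and, for these small primes, modest) period; the only delicate point is that one must use the \emph{minimal} period, not merely some period, to keep the number of cases — and hence the computation — as small as possible.

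Next I would carry out the finite verification. Generating enough coefficients of $PL_{\ell}(q) = \prod_{n \geq 1}(1-q^n)^{-\min\{\ell,n\}}$ to read off $pl_{\ell}(m)$ for all $m < \pi_{\ell}(F_{\ell})$ is a routine power-series computation, and from these values one checks directly that the indicated sums of residues agree modulo $\ell$ on one full period. For instance, (\ref{g2}) requires confirming $pl_{3}(3n+2) \equiv 0 \pmod{3}$ only for $n$ in the range dictated by $\pi_{3}(F_{3})/3$, and (\ref{g6}) requires matching $pl_{7}(7n+2)+pl_{7}(7n+3)$ against $pl_{7}(7n+4)+pl_{7}(7n+5)$ on the analogous finite range for $\ell = 7$. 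Here I read the index in (\ref{g5}) as $pl_{5}(5n+3)$, the subscript $3$ being an evident typo.

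The main obstacle is the $\ell = 7$ case of (\ref{g6}): the period $\pi_{7}(F_{7})$ is the largest of the four, so both the number of residue classes to track and the number of plane-partition coefficients to generate grow accordingly, making this the most computationally demanding verification. A secondary subtlety worth flagging is that Theorem \ref{th1} only certifies a congruence once the correct candidate form has been fixed; to be sure we have identified genuine congruences rather than accidental coincidences on a sub-period, the finite check must be run over the full minimal period $\pi_{\ell}(F_{\ell})/\ell$, which is exactly where an accurate value of the period from step (i) becomes essential.
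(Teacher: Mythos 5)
Your proposal is correct and takes essentially the same route the paper intends for this result: the paper does not reprove Theorem \ref{th2} itself but cites \cite{periodic}, where each congruence is obtained exactly as you describe, by applying Theorem \ref{th1} with the minimal period $\pi_{\ell}(F_{\ell})$ supplied by Kwong's theorem (Theorem \ref{kwong}) and then verifying the congruence computationally for the finitely many $n < \pi_{\ell}(F_{\ell})/\ell$. Your reading of \eqref{g5} as a typo for $pl_{5}(5n+3)$ is also the intended statement.
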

 
Note that the identities \eqref{g1}, \eqref{g3},\eqref{g4} and  \eqref{g5} were previously shown by Gandhi \cite{gandhi}, while \eqref{g2} and \eqref{g6} are proved in \cite{periodic}. 

We are now able to state the main result of this paper. We generalize Theorem \ref{th1} to a wider class of $q$-series, and to include prime power moduli. 

\begin{theorem}\label{thA}
	Fix a prime $\ell$, and let $ N, K, \delta$ be any positive integers. Let $A(q), B(q) \in \mathbb{Z}[[q]]$ such that
	$A(q):=\sum_{n=0}^{\infty}{\alpha(n)q^{n}}$
	is periodic modulo $\ell^N$ with minimal period $\pi_{\ell^N}(A)=\delta K$ and suppose that $B(q):=\sum_{m=0}^{\infty}{\beta(m)q^{m}}$, where $\beta(0)\equiv 1 \pmod{\ell^N}$ and $\beta(m)\equiv 0 \pmod{\ell^N}$ for $m\not\equiv 0 \pmod{\delta }.$ 
	Define
	$$G(q):=A(q)\cdot  B(q):=\sum_{k=0}^{\infty}\lambda(k) q^{k}.$$
	Fix  positive integers $s, t$ and nonnegative integers $0\leq a_{i},b_{j}\leq \delta-1$ for each $1\leq i\leq s, 1\leq j\leq t$. If 
	\begin{align*}
	\sum_{i=1}^{s}\lambda(\delta  n+a_{i})\equiv\sum_{j=1}^{t} \lambda(\delta  n+b_{j})\pmod{\ell^N},
	\end{align*}
	holds for all $0\leq n < \pi_{\ell^N}(A)/\delta$, then it holds for all $n\geq 0$. 
\end{theorem}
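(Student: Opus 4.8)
The plan is to reduce the statement to a purely one-dimensional periodicity argument by exploiting the special support of $B(q)$. First I would write $\lambda(k)$ as the Cauchy product $\lambda(k)=\sum_{m=0}^{k}\alpha(k-m)\beta(m)$. Since $\beta(m)\equiv 0\pmod{\ell^N}$ whenever $\delta\nmid m$, only the indices $m=\delta(n-k)$ survive modulo $\ell^N$, and because $0\le a\le\delta-1$ the surviving range is clean; reindexing gives, for each residue $0\le a\le\delta-1$,
\[
\lambda(\delta n+a)\equiv\sum_{k=0}^{n}\alpha(\delta k+a)\,\beta\big(\delta(n-k)\big)\pmod{\ell^N}.
\]
This exhibits the $a$-section of $G$ as a convolution of the $a$-section of $A$ with the sequence $k\mapsto\beta(\delta k)$, whose zeroth term $\beta(0)$ is a unit modulo $\ell^N$.

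Next I would package the target congruence into a single quantity. Set $K:=\pi_{\ell^N}(A)/\delta$, so that the hypothesis is exactly that the difference
\[
D(n):=\sum_{i=1}^{s}\lambda(\delta n+a_i)-\sum_{j=1}^{t}\lambda(\delta n+b_j)
\]
vanishes modulo $\ell^N$ for $0\le n<K$. Summing the displayed convolution over the $a_i$ and $b_j$ and collecting terms yields $D(n)\equiv\sum_{k=0}^{n}f(k)\,\beta(\delta(n-k))\pmod{\ell^N}$, where $f(k):=\sum_{i=1}^{s}\alpha(\delta k+a_i)-\sum_{j=1}^{t}\alpha(\delta k+b_j)$. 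The key structural observation is that $f$ is periodic modulo $\ell^N$ with period $K$: since $\alpha$ has period $\pi_{\ell^N}(A)=\delta K$, each term satisfies $\alpha(\delta(k+K)+a)=\alpha(\delta k+a+\delta K)\equiv\alpha(\delta k+a)$, so $f(k+K)\equiv f(k)$.

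The crux is then a short induction that transfers the vanishing of $D$ on $[0,K)$ to the vanishing of $f$ on $[0,K)$. Because $\beta(0)\equiv 1\pmod{\ell^N}$, for $0\le n<K$ the convolution isolates the top term: if $f(0)\equiv\cdots\equiv f(n-1)\equiv 0$, then $D(n)\equiv f(n)\beta(0)\equiv f(n)\pmod{\ell^N}$, so the hypothesis $D(n)\equiv 0$ forces $f(n)\equiv 0$. Hence $f\equiv 0$ on $[0,K)$, and periodicity immediately upgrades this to $f(k)\equiv 0\pmod{\ell^N}$ for every $k\ge 0$. Feeding this back into $D(n)\equiv\sum_{k=0}^{n}f(k)\beta(\delta(n-k))$ shows $D(n)\equiv 0$ for all $n\ge 0$, which is the desired congruence. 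I expect the main obstacle to be conceptual rather than computational: one must resist trying to prove that $D$ itself is periodic (it generally is not, since $k\mapsto\beta(\delta k)$ need not be periodic), and instead notice that the unit $\beta(0)$ lets the periodic sequence $f$ — rather than $D$ — carry the argument. The only point requiring care in the reduction is the bookkeeping of the summation range, where the inequality $a\le\delta-1$ guarantees that $k$ runs exactly from $0$ to $n$.
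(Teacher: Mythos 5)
Your proof is correct and follows essentially the same route as the paper's own argument: the sectioned convolution identity $\lambda(\delta n+a)\equiv\sum_{k=0}^{n}\alpha(\delta k+a)\beta(\delta(n-k))\pmod{\ell^N}$, an induction using the unit $\beta(0)$ to transfer the hypothesis from the $\lambda$-sums to the $\alpha$-sums on $[0,K)$, extension of the $\alpha$-congruence to all $n$ via the periodicity of $A(q)$, and finally feeding this back through the convolution. Your packaging of the two sides into the differences $D(n)$ and $f(k)$ is a cosmetic streamlining of the paper's two-sided congruences, not a different method.
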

The generality of Theorem \ref{thA} gives potential for many more applications, which we discuss further in Section 2. Two such examples for plane partitions are as follows. We prove in Theorem \ref{power} that for all $n\geq 0$,
\begin{align*}
pl_{8}(8n)+pl_{8}(8n+1) \equiv pl_{8}(8n+3) \pmod{2},
\end{align*}
\begin{align*}
pl_{9}(9n+1)\equiv pl_{9}(9n+8)\pmod{3}.
\end{align*}
 
The rest of this paper is organized as follows. In Section 2, we review some preliminaries, including a useful theorem of Kwong \cite{kwong1} and further definitions of types of partition functions for which we can apply Theorem \ref{thA}. In Section 3, we prove several congruences for a variety of partition congruences as applications of our main theorem. In section 4, we prove Theorem \ref{thA}. In Section 5, we conclude with final remarks.
\section{Preliminaries}

Before we state a result of Kwong \cite{kwong1}, we recall some necessary definitions from \cite{periodic}. 
For an integer $n$ and prime $\ell$, define $ord_{\ell}(n)$ to be the unique nonnegative integer such that $$\ell^{ord_{\ell}(n)}\cdot m=n,$$ where $m$ is an integer and $\ell\nmid m$. In addition, we call $m$ the {\it{$\ell$-free part}} of $n$.
 
For a finite multiset of positive integers $S$, we define $m_{\ell}(S)$ to be the $\ell$-free part of $lcm\{n|n\in S\}$, and $b_{\ell}(S)$ to be the least nonnegative integer such that 
$$\ell^{b_{\ell}(S)}\geq \sum_{n\in S}\ell^{ord_{\ell}(n)}.$$

\begin{theorem}[Kwong,\cite{kwong1}]\label{kwong}
Fix a prime $\ell$, and a finite multiset $S$ of positive integers. Then for any positive integer $N$, $$A(q)=\sum_{n=0}^{\infty} {p(n;S)q^{n}}$$ is periodic modulo $\ell^{N}$, with minimal period $$\pi_{\ell^{N}}(A)=\ell^{N+b_{\ell}(S)-1}\cdot m_{\ell}(S).$$
\end{theorem}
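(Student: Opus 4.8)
The plan is to convert the periodicity assertion into a statement about the order of $q$ in a finite ring. Writing the distinct values of $S$ as $v$ with multiplicities $m_v$, the generating function is $A(q)=1/Q(q)$ with $Q(q):=\prod_v (1-q^v)^{m_v}\in\mathbb Z[q]$ and $Q(0)=1$. First I would record the elementary equivalence that, for $Q(0)$ a unit modulo $\ell^N$, the series $A$ is purely periodic modulo $\ell^N$ with period $d$ exactly when $(1-q^d)A(q)$ reduces to a polynomial of degree $<d$ modulo $\ell^N$, i.e. when $Q(q)\mid(1-q^d)$ in $(\mathbb Z/\ell^N\mathbb Z)[q]$ (the degree bound being automatic since $\deg Q\ge 1$). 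Consequently $\pi_{\ell^N}(A)$ is precisely the multiplicative order of the image of $q$ in the finite ring $R_N:=(\mathbb Z/\ell^N\mathbb Z)[q]/(Q(q))$, and the theorem reduces to computing this order.

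The engine of the computation is the congruence $(1-q^n)^{\ell^k}\equiv(1-q^{n\ell})^{\ell^{k-1}}\pmod{\ell^k}$ for $n,k\ge 1$. I would prove it by writing $(1-x)^\ell=(1-x^\ell)+\ell\,h(x)$ with $h\in\mathbb Z[x]$, expanding the $\ell^{k-1}$-th power, and using $\mathrm{ord}_\ell\binom{\ell^{k-1}}{j}=(k-1)-\mathrm{ord}_\ell(j)$ to see that every term with $j\ge 1$ is divisible by $\ell^{(k-1)-\mathrm{ord}_\ell(j)+j}$, hence by $\ell^k$; substituting $x=q^n$ finishes it. Iterating this collapses large $\ell$-power powers of each $(1-q^v)$ into honest binomials $1-q^{v\ell^{\cdots}}$ modulo $\ell^N$, which is the mechanism translating the multiplicities $m_v$ and the valuations $\mathrm{ord}_\ell(v)$ into the exponent $N+b_\ell(S)-1$.

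Next I would establish that $d^\ast:=\ell^{\,N+b_\ell(S)-1}\,m_\ell(S)$ is a period, i.e. $Q(q)\mid 1-q^{d^\ast}\pmod{\ell^N}$. The $\ell$-free factor $m_\ell(S)=\mathrm{lcm}_v(u_v)$, where $u_v$ is the $\ell$-free part of $v$, ensures that $1-q^{u_v}$ divides $1-q^{m_\ell(S)}$ for every $v$, so every cyclotomic constituent is accounted for; the $\ell$-power exponent is then chosen just large enough that, after applying the lemma to absorb $\mathrm{ord}_\ell(v)$ and the combined multiplicity, the whole factor $(1-q^v)^{m_v}$ divides $1-q^{d^\ast}$ modulo $\ell^N$. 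Here the defining inequality $\ell^{b_\ell(S)}\ge\sum_{n\in S}\ell^{\mathrm{ord}_\ell(n)}$ is exactly the threshold at which raising to the $\ell^{b_\ell(S)}$-th power swallows the total multiplicity modulo $\ell$, and the extra $N-1$ lifts this from modulus $\ell$ to $\ell^N$.

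Finally, and this is where the real work lies, I would prove minimality. Reducing first to $N=1$, factor $Q$ over $\mathbb F_\ell$ via $(1-q^v)=(1-q^{u_v})^{\ell^{\mathrm{ord}_\ell(v)}}$ and the separable cyclotomic splitting of $1-q^{u_v}$, then apply the CRT to $\mathbb F_\ell[q]/(Q)$; the order of $q$ there is $\mathrm{lcm}_{d}\bigl(d\,\ell^{\lceil\log_\ell e_d\rceil}\bigr)$ with $e_d=\sum_{v:\,d\mid u_v}m_v\ell^{\mathrm{ord}_\ell(v)}$, and this evaluates to $\ell^{b_\ell(S)}m_\ell(S)$ because $e_1=\sum_{n\in S}\ell^{\mathrm{ord}_\ell(n)}$ forces the $\ell$-power $\ell^{b_\ell(S)}$ while the occurring indices $d$ have least common multiple $m_\ell(S)$. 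The crux is then the lifting estimate that passing from modulus $\ell^{j}$ to $\ell^{j+1}$ multiplies the order of $q$ by exactly $\ell$, giving $\mathrm{ord}_{R_N}(q)=\ell^{N-1}\mathrm{ord}_{R_1}(q)$. I expect this to be the main obstacle: it requires sharp $\ell$-adic control of how the powers of the nilpotents $q^{u_v}-1$ interact with the modulus as it grows, and it is precisely the point where one must use that the multiplicities make the partition counts grow genuinely (for a single part such as $S=\{1\}$ the order does not lift, so some such non-degeneracy is implicitly needed).
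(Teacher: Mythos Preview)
The paper does not prove this statement; Theorem~\ref{kwong} is quoted as a result of Kwong and used only as a black box (to feed into Lemma~\ref{lemmaR} and Corollary~\ref{corR}). There is therefore no argument in the paper to compare your proposal against.

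That said, your outline is a plausible reconstruction of how such a result is proved. The reduction of pure periodicity with period $d$ to the divisibility $Q(q)\mid 1-q^d$ in $(\mathbb{Z}/\ell^N\mathbb{Z})[q]$, and hence to the multiplicative order of $q$ in $(\mathbb{Z}/\ell^N\mathbb{Z})[q]/(Q(q))$, is correct given that $Q(0)=1$ is a unit. Your key congruence $(1-q^n)^{\ell^k}\equiv(1-q^{n\ell})^{\ell^{k-1}}\pmod{\ell^k}$ is exactly the paper's Lemma~\ref{lemmaL2}, so you have independently isolated the right elementary tool. You are also right to flag the lifting from modulus $\ell^j$ to $\ell^{j+1}$ as the crux, and your closing observation is sharp: for $S=\{1\}$ one has $A(q)=\sum_{n\ge 0}q^n$ with minimal period $1$ modulo every $\ell^N$, whereas the displayed formula gives $\ell^{N-1}$ (since $b_\ell(\{1\})=0$ and $m_\ell(\{1\})=1$). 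So either Kwong's original hypotheses carry a non-degeneracy condition such as $\sum_{n\in S}\ell^{\mathrm{ord}_\ell(n)}\ge 2$, or the transcription here is slightly imprecise. Every application the paper actually makes involves multisets $S_{k,\overline{e}}$ with several elements, so this edge case never arises in the paper's arguments; it is a subtlety in the cited result, not in the present work.
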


We note that Theorem \ref{kwong} can be applied to calculate the minimum periodicity modulo prime powers of any rational functions of the form
\begin{equation}
R_{k}(e_{1},e_{2},\ldots, e_{k};q):=\frac{1}{(1-q)^{e_{1}}(1-q^{2})^{e_{2}}\cdots (1-q^{k})^{e_{k}}},
\end{equation}
where $k$ is a positive integer and $e_{i}$ are nonnegative integers for $1\leq i\leq k$. For the positive integers $e_{i}$, consider the multiset of positive integers $i_{j}$ associated with $e_{i}$ for $1\leq j\leq e_{i}$, that is
$$S_{k,\overline{e}}=\{i_{j}|   1\leq i \leq k, e_{i}\geq1,  1\leq j\leq e_{i}\},$$
where $\overline{e}:=(e_{1},e_{2},\ldots, e_{k}).$
Then by the standard partition theory argument, we get
$$\sum_{n\geq 0}{p(n;S_{k,\overline{e}})q^{n}}=R_{k}(e_{1},e_{2},\ldots, e_{k};q).$$
\begin{lemma}\label{lemmaR}
Fix a prime $\ell$ and a nonnegative integer $N$, then $R_{k}(e_{1},e_{2},\ldots, e_{k};q)$ is a periodic q-series modulo $\ell^{N}$ with minimal period
$$\pi_{\ell^{N}}(R_{k})=\ell^{N+b_{\ell}(S_{k,\overline{e}})-1}\cdot m_{\ell}(S_{k,\overline{e}}).$$
\end{lemma}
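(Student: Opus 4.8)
The plan is to recognize Lemma \ref{lemmaR} as an immediate corollary of Kwong's Theorem \ref{kwong}, once $R_{k}(e_{1},e_{2},\ldots,e_{k};q)$ is correctly interpreted as the generating function of a restricted partition function attached to a finite multiset. Essentially all of the real work has been front-loaded into the paragraph preceding the lemma, so the argument reduces to checking that the hypotheses of Theorem \ref{kwong} are met and then quoting its conclusion verbatim.

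First I would fix the multiset $S_{k,\overline{e}}=\{i_{j}\mid 1\leq i\leq k,\; e_{i}\geq 1,\; 1\leq j\leq e_{i}\}$ and record the generating function identity
$$\sum_{n\geq 0}p(n;S_{k,\overline{e}})q^{n}=R_{k}(e_{1},e_{2},\ldots,e_{k};q).$$
This is the only step with genuine content, and it is the standard partition-theoretic computation: each indexed copy $i_{j}$ of the part $i$ may be used an arbitrary number of times and contributes a geometric factor $\tfrac{1}{1-q^{i}}$, and since there are exactly $e_{i}$ such indexed copies (all of size $i$), their joint contribution is $\tfrac{1}{(1-q^{i})^{e_{i}}}$. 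Taking the product over $1\leq i\leq k$ yields $R_{k}$, where the indices realize precisely the convention that repeated parts are treated independently, as introduced earlier in the paper.

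With this identity in hand, $R_{k}$ is literally the series $A(q)=\sum_{n\geq 0}p(n;S)q^{n}$ of Theorem \ref{kwong} for the choice $S=S_{k,\overline{e}}$, a finite multiset of positive integers. Applying Theorem \ref{kwong} then gives at once that $R_{k}$ is periodic modulo $\ell^{N}$ with minimal period $\ell^{N+b_{\ell}(S_{k,\overline{e}})-1}\cdot m_{\ell}(S_{k,\overline{e}})$, which is exactly the claimed value of $\pi_{\ell^{N}}(R_{k})$. Here $m_{\ell}(S_{k,\overline{e}})$ is the $\ell$-free part of $lcm\{i : 1\leq i\leq k,\; e_{i}\geq 1\}$ and $b_{\ell}(S_{k,\overline{e}})$ is governed by the sum $\sum_{i:\,e_{i}\geq 1}e_{i}\,\ell^{ord_{\ell}(i)}$, but no explicit evaluation of these quantities is needed for the proof.

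Because the lemma is a direct specialization, I do not expect a genuine obstacle; the only points requiring care are bookkeeping ones. Theorem \ref{kwong} is stated for a positive integer $N$, whereas Lemma \ref{lemmaR} allows nonnegative $N$, so I would either restrict to $N\geq 1$ (the degenerate modulus $\ell^{0}=1$ being trivial) or note that the nonnegative hypothesis is understood with $N\geq 1$. I would also make sure $S_{k,\overline{e}}$ is genuinely finite and nonempty, discarding the indices $i$ with $e_{i}=0$, so that $lcm$, $m_{\ell}$, and $b_{\ell}$ are all well defined.
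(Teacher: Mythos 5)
Your proof is correct and follows essentially the same route as the paper: both identify $R_{k}(e_{1},\ldots,e_{k};q)$ as the generating function $\sum_{n\geq 0}p(n;S_{k,\overline{e}})q^{n}$ via the standard partition argument with indexed copies of repeated parts, and then quote Theorem \ref{kwong} to obtain the periodicity and minimal period. Your additional remarks on the $N\geq 1$ convention and the finiteness of $S_{k,\overline{e}}$ are reasonable bookkeeping points that the paper leaves implicit.
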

\begin{proof}
By the previous argument,
$$\sum_{n\geq 0}{p(n;S_{k,\overline{e}})q^{n}}=R_{k}(e_{1},e_{2},\ldots, e_{k};q).$$
The rest follows by Theorem \ref{kwong}.
\end{proof}

For example, let $k=4$, and $\overline{e}=(1,0,2,3)$. Thus,
$$R_{4}(1,0,2,3;q)=\frac{1}{(1-q)(1-q^3)^2(1-q^4)^3}$$
is generated by the multiset 
$$S_{4,\overline{e}}=\{1,3_{1},3_{2},4_{1},4_{2},4_{3}\}.$$
In particular, for $\ell=3$ and $N=1$, we calculate $b_{3}(S_{4,\overline{e}})$ and $m_{3}(S_{4,\overline{e}})$ where
$$3^{b_{3}(S_{4,\overline{e}})}\geq 3^0+2\cdot 3^1+ 3\cdot 3^0=10 \implies b_{3}(S_{4,\overline{e}})=3, $$
and 
$$m_{3}(S_{4,\overline{e}})=4.$$
Thus by Lemma \ref{lemmaR}, the minimal periodicity modulo $3$ of  $R_{4}(1,0,2,3;q)$ is given by
$$\pi_{3}(R_{4})=3^3\cdot 4.$$

Let $\ell$ be a prime, consider the special case of Lemma \ref{lemmaR}  with $k=\ell-1$ and $e_{i}=i$ for $1\leq i\leq \ell-1$. Then  
$$F_{\ell}(q)=R_{\ell-1}(1,2,\ldots, \ell-1;q)=\prod_{n=1}^{\ell-1}{\frac{1}{(1-q^{n})^{n}}}.$$ 
We then have the following immediate corollary which is a particular case of Corollary 2.4 in \cite{periodic}.
\begin{corollary}\label{corR}
For a prime $\ell$, and a positive integer $N$, $F_{\ell}(q)$ is periodic modulo $\ell^N$ with minimal period 
$$\pi_{\ell^{N}}(F_{\ell})=\ell^{N+b_{\ell}(S_{\ell-1,\overline{e}})-1}\cdot m_{\ell}(S_{\ell-1,\overline{e}}),$$
where $\overline{e}=(1,\dots, \ell-1)$.
\end{corollary}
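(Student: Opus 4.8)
The plan is to recognize that this corollary is nothing more than the specialization of Lemma \ref{lemmaR} to one particular choice of parameters, so almost all of the work has already been done. First I would observe that taking $k=\ell-1$ and the exponent vector $\overline{e}=(1,2,\dots,\ell-1)$, i.e. $e_i=i$ for each $1\le i\le \ell-1$, the general rational function $R_k(e_1,\dots,e_k;q)$ becomes
$$R_{\ell-1}(1,2,\dots,\ell-1;q)=\prod_{n=1}^{\ell-1}\frac{1}{(1-q^n)^n}=F_{\ell}(q),$$
which is precisely the identification already recorded in the paragraph preceding the corollary. Since each $e_i=i$ is a positive integer, the condition $e_i\ge 1$ in the definition of the multiset $S_{\ell-1,\overline{e}}$ is met for every $1\le i\le \ell-1$, so $S_{\ell-1,\overline{e}}$ is well-defined and consists of each integer $i$ taken with multiplicity $i$.

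Having matched the notation, I would simply invoke Lemma \ref{lemmaR} for this value of $k$ and this $\overline{e}$: for a fixed prime $\ell$ and positive integer $N$, the lemma guarantees that $F_{\ell}(q)=R_{\ell-1}(1,2,\dots,\ell-1;q)$ is periodic modulo $\ell^N$ with minimal period
$$\pi_{\ell^{N}}(F_{\ell})=\pi_{\ell^{N}}(R_{\ell-1})=\ell^{N+b_{\ell}(S_{\ell-1,\overline{e}})-1}\cdot m_{\ell}(S_{\ell-1,\overline{e}}),$$
which is exactly the claimed formula. There is essentially no obstacle here: the only things to verify are the bookkeeping that the parameters line up and that Lemma \ref{lemmaR} is available for every positive integer $N$, which it is, since $N$ is arbitrary in that statement. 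One could additionally remark that $b_{\ell}(S_{\ell-1,\overline{e}})$ and $m_{\ell}(S_{\ell-1,\overline{e}})$ are computable explicitly from the multiset in which $i$ occurs $i$ times, but this refinement is not required for the statement as phrased.
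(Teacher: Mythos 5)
Your proposal is correct and matches the paper's own treatment exactly: the paper derives this corollary immediately from Lemma \ref{lemmaR} by specializing to $k=\ell-1$ and $e_i=i$, using the identification $F_{\ell}(q)=R_{\ell-1}(1,2,\ldots,\ell-1;q)$ stated just before the corollary. Your bookkeeping checks (well-definedness of $S_{\ell-1,\overline{e}}$ and the availability of the lemma for arbitrary $N$) are exactly what makes the specialization valid, so nothing is missing.
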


We also state the following elementary lemmas which can be easily proved inductively on $N$.
\begin{lemma}\label{lemmaL}
For any prime $\ell$ and positive integers $j$ and $N$,
\begin{align*}
(1-q^j)^{\ell^N}\equiv (1-q^{j\ell^N})\pmod{\ell}.
\end{align*}
\end{lemma}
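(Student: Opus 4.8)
The plan is to prove the congruence by induction on $N$, exactly as the statement suggests, with the conceptual engine being the ``freshman's dream'' in characteristic $\ell$: reducing coefficients modulo $\ell$ places us in the ring $\mathbb{F}_{\ell}[q]$, where the map $x \mapsto x^{\ell}$ is a ring homomorphism (the Frobenius endomorphism). I would phrase the whole argument as a chain of congruences between elements of $\mathbb{Z}[q]$ modulo $\ell$, so that reduction mod $\ell$ respects addition, multiplication, and hence the operation of raising to the $\ell$-th power.

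For the base case $N = 1$, I would expand by the binomial theorem:
\begin{align*}
(1-q^j)^{\ell} = \sum_{k=0}^{\ell} \binom{\ell}{k}(-1)^k q^{jk}.
\end{align*}
Since $\ell$ is prime, $\ell \mid \binom{\ell}{k}$ for every $0 < k < \ell$, so all intermediate terms vanish modulo $\ell$ and we are left with $1 + (-1)^{\ell} q^{j\ell}$. The one point requiring a word of care is the sign $(-1)^{\ell}$: for odd $\ell$ it equals $-1$ and yields $1 - q^{j\ell}$ directly, while for $\ell = 2$ it equals $+1$, but since $+1 \equiv -1 \pmod{2}$ the term $q^{2j}$ carries the correct sign modulo $2$ as well. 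Hence $(1-q^j)^{\ell} \equiv 1 - q^{j\ell} \pmod{\ell}$ uniformly in $\ell$.

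For the inductive step, assuming $(1-q^j)^{\ell^N} \equiv 1 - q^{j\ell^N} \pmod{\ell}$, I would write
\begin{align*}
(1-q^j)^{\ell^{N+1}} = \left((1-q^j)^{\ell^N}\right)^{\ell} \equiv \left(1 - q^{j\ell^N}\right)^{\ell} \pmod{\ell},
\end{align*}
where the congruence uses that raising both sides of a congruence modulo $\ell$ to the $\ell$-th power preserves it. Applying the base case once more with $j$ replaced by $j\ell^N$ gives $\left(1 - q^{j\ell^N}\right)^{\ell} \equiv 1 - q^{j\ell^{N+1}} \pmod{\ell}$, closing the induction. Alternatively one could bypass the induction altogether by iterating Frobenius $N$ times to see that $x \mapsto x^{\ell^N}$ is a ring homomorphism on $\mathbb{F}_{\ell}[q]$, whence $(1-q^j)^{\ell^N} = 1 - (q^j)^{\ell^N} = 1 - q^{j\ell^N}$ in one step. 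There is no genuine obstacle here; the only subtlety worth flagging is the $\ell = 2$ sign reconciliation in the base case, which the Frobenius framing renders transparent.
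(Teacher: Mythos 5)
Your proof is correct and follows exactly the route the paper indicates: the paper states this lemma without proof, remarking only that it ``can be easily proved inductively on $N$,'' and your induction on $N$ with the freshman's dream (prime divisibility of $\binom{\ell}{k}$ for $0<k<\ell$, plus the $\ell=2$ sign check) as the base case is precisely that argument, carried out in full.
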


\begin{lemma}\label{lemmaL2}
For any prime $\ell$ and positive integers $j$ and $N$,
\begin{align*}
(1-q^j)^{\ell^N} \equiv (1-q^{j\ell})^{\ell^{N-1}} \pmod{\ell^N}.
\end{align*}

\end{lemma}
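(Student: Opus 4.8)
The plan is to prove the congruence by induction on $N$, with the base case furnished by Lemma \ref{lemmaL} and the inductive step resting on one elementary observation: raising two $\ell^m$-congruent power series to the $\ell$-th power yields series that agree modulo $\ell^{m+1}$. For the base case $N=1$, the claim reduces to $(1-q^j)^\ell \equiv (1-q^{j\ell}) \pmod{\ell}$, which is exactly Lemma \ref{lemmaL} specialized to $N=1$ (equivalently, the Frobenius identity $(a+b)^\ell \equiv a^\ell + b^\ell \pmod{\ell}$ applied to $a=1$, $b=-q^j$).

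The engine of the induction is the following claim: if $X(q), Y(q) \in \mathbb{Z}[[q]]$ satisfy $X \equiv Y \pmod{\ell^m}$ for some $m \geq 1$, then $X^\ell \equiv Y^\ell \pmod{\ell^{m+1}}$. To see this, I would write $X = Y + \ell^m Z$ with $Z \in \mathbb{Z}[[q]]$ and expand
$$X^\ell = \sum_{k=0}^{\ell} \binom{\ell}{k} Y^{\ell-k} (\ell^m Z)^k.$$
The $k=0$ term is $Y^\ell$; the $k=1$ term carries a factor $\ell \cdot \ell^m = \ell^{m+1}$; and every term with $2 \le k \le \ell$ carries a factor $\ell^{mk}$ with $mk \geq 2m \geq m+1$. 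Hence each term beyond $Y^\ell$ is divisible by $\ell^{m+1}$, establishing the claim. (One may additionally invoke $\ell \mid \binom{\ell}{k}$ for $1 \le k \le \ell-1$, which only strengthens the divisibility.)

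For the inductive step, I would assume the lemma for $N-1 \geq 1$, namely
$$(1-q^j)^{\ell^{N-1}} \equiv (1-q^{j\ell})^{\ell^{N-2}} \pmod{\ell^{N-1}},$$
and apply the claim with $m = N-1$, $X = (1-q^j)^{\ell^{N-1}}$, and $Y = (1-q^{j\ell})^{\ell^{N-2}}$ to obtain
$$(1-q^j)^{\ell^{N}} = X^\ell \equiv Y^\ell = (1-q^{j\ell})^{\ell^{N-1}} \pmod{\ell^{N}},$$
which is precisely the statement for $N$, completing the induction.

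Since every ingredient is elementary, I expect no serious obstacle. The only point demanding care is the key claim, specifically the verification that every binomial term with $k \geq 2$ is divisible by $\ell^{m+1}$; this is exactly where the hypothesis $m \geq 1$ is needed, and it is guaranteed in the inductive step because there $m = N-1 \geq 1$.
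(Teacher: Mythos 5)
Your proof is correct: the induction on $N$, with base case given by Lemma \ref{lemmaL} (equivalently the Frobenius congruence) and the key step that $X \equiv Y \pmod{\ell^m}$ implies $X^\ell \equiv Y^\ell \pmod{\ell^{m+1}}$ for $m \geq 1$, is sound in every detail. This is exactly the route the paper intends --- it states only that the lemma ``can be easily proved inductively on $N$'' and omits the argument, which your write-up supplies.
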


We observe in the following lemma that the restricted plane partition generating functions are always of the shape needed in Theorem \ref{thA}.
\begin{lemma}\label{lemma2}
For a prime $\ell$ and a positive integer $N$, then 
$$PL_{\ell^N}(q)\equiv F_{\ell^N}(q)\cdot\sum_{m\geq 0}{\beta(m)q^{\ell^N m}} \pmod{\ell},$$ 
$$PL_{\ell^N}(q)\equiv F_{\ell^N}(q)\cdot\sum_{m\geq 0}{\beta^\prime (m)q^{\ell m}} \pmod{\ell^N},$$ 
where $\beta(m), \beta^\prime(m)\in \mathbb{N}.$
\end{lemma}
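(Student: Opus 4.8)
The plan is to factor $PL_{\ell^N}(q)$ so that its first $\ell^N-1$ factors reproduce $F_{\ell^N}(q)=\prod_{n=1}^{\ell^N-1}(1-q^n)^{-n}$ and the remaining factors form a tail on which the two elementary lemmas can act. Starting from $PL_{\ell^N}(q)=\prod_{n=1}^{\infty}(1-q^n)^{-\min\{\ell^N,n\}}$, I would split the product at $n=\ell^N$: for $1\le n\le \ell^N-1$ the exponent is $\min\{\ell^N,n\}=n$, while for $n\ge \ell^N$ it equals $\ell^N$. This gives the clean identity
\[
PL_{\ell^N}(q)=F_{\ell^N}(q)\cdot P(q),\qquad P(q):=\prod_{n=\ell^N}^{\infty}\frac{1}{(1-q^n)^{\ell^N}},
\]
which is the structural heart of the lemma; everything else amounts to extracting the right power-series shape from $P(q)$.

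For the first congruence I would reduce $P(q)$ modulo $\ell$ factor by factor. By Lemma \ref{lemmaL}, $(1-q^n)^{\ell^N}\equiv (1-q^{n\ell^N})\pmod{\ell}$ for each $n$; since every factor has constant term $1$ its reciprocal is well defined in $\mathbb{Z}[[q]]$, and congruent units have congruent inverses, so $(1-q^n)^{-\ell^N}\equiv (1-q^{n\ell^N})^{-1}\pmod{\ell}$. Passing to the product is legitimate because each fixed coefficient involves only finitely many factors, and this yields
\[
P(q)\equiv \prod_{n=\ell^N}^{\infty}\frac{1}{1-q^{n\ell^N}}=\sum_{m\ge 0}\beta(m)\,q^{\ell^N m}\pmod{\ell},
\]
where $\beta(m)$ counts partitions of $m$ into parts of size at least $\ell^N$, hence $\beta(m)\in\mathbb{N}$. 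The second congruence is identical in spirit but invokes Lemma \ref{lemmaL2}, giving $(1-q^n)^{\ell^N}\equiv (1-q^{n\ell})^{\ell^{N-1}}\pmod{\ell^N}$; the same reciprocal-and-product argument produces $P(q)\equiv \prod_{n\ge \ell^N}(1-q^{n\ell})^{-\ell^{N-1}}=\sum_{m\ge 0}\beta'(m)q^{\ell m}\pmod{\ell^N}$, a series in $q^{\ell}$ with $\beta'(m)\in\mathbb{N}$.

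Finally I would multiply both congruences back by $F_{\ell^N}(q)$. Since $F_{\ell^N}(q)\in\mathbb{Z}[[q]]$, multiplying a power-series congruence by a fixed integral series preserves it, because each coefficient of the product is an integral combination of congruent coefficients; thus $PL_{\ell^N}(q)=F_{\ell^N}(q)P(q)$ inherits both congruences and the lemma follows. I expect the only genuinely delicate point to be the justification that the factorwise congruences of Lemmas \ref{lemmaL} and \ref{lemmaL2} survive both inversion and the infinite product; this rests on the observation that all the relevant series are units with constant term $1$, so their $q$-adic limits and inverses behave well modulo $\ell$ and modulo $\ell^N$. The remaining bookkeeping — the split at $n=\ell^N$ and the combinatorial nonnegativity of $\beta(m)$ and $\beta'(m)$ — is routine.
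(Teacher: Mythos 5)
Your proposal is correct and follows essentially the same route as the paper: the same split of the product at $n=\ell^N$ giving $PL_{\ell^N}(q)=F_{\ell^N}(q)\cdot\prod_{n\geq \ell^N}(1-q^n)^{-\ell^N}$, followed by applying Lemma \ref{lemmaL} modulo $\ell$ and Lemma \ref{lemmaL2} modulo $\ell^N$ to the tail. The only difference is that you spell out the justification (invertibility of units with constant term $1$, coefficient-wise finiteness of the infinite product, and the combinatorial interpretation of $\beta(m)$, $\beta'(m)$) that the paper leaves implicit.
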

\begin{proof}
We recall the generating function of $\ell^N$-rowed plane partitions from \eqref{eq4}
$$PL_{\ell^N}(q)=F_{\ell^N}(q)\cdot \prod_{n=\ell^N}^{\infty} \frac{1}{(1-q^n)^{\ell^N}}.$$
By Lemma \eqref{lemmaL} and Lemma\eqref{lemmaL2}, one can easily see that
$$\prod_{n=\ell^N}^{\infty} \frac{1}{(1-q^n)^{\ell^N}}\equiv \prod_{n=\ell^N}^{\infty} \frac{1}{(1-q^{n\ell^N})} \pmod{\ell}=:\sum_{m\geq 0}{\beta(m)q^{\ell^N m}} \pmod{\ell},$$

$$\prod_{n=\ell^N}^{\infty} \frac{1}{(1-q^n)^{\ell^N}}\equiv \prod_{n=\ell^N}^{\infty} \frac{1}{(1-q^{n\ell})^{\ell^{N-1}}} \pmod{\ell^N}=:\sum_{m\geq 0}{\beta^\prime(m)q^{\ell m}} \pmod{\ell^N},$$
where $\beta(m), \beta^\prime(m)\in \mathbb{N}$. Therefore,
$$PL_{\ell}(q)\equiv F_{\ell^N}(q)\cdot \prod_{n=\ell^N}^{\infty} \frac{1}{(1-q^{n\ell^N})} \pmod{\ell}\equiv F_{\ell^N}(q)\cdot\sum_{m\geq 0}{\beta(m)q^{\ell^N m}} \pmod{\ell},$$
and 
$$PL_{\ell}(q)\equiv F_{\ell^N}(q)\cdot \prod_{n=\ell^N}^{\infty} \frac{1}{(1-q^{n\ell})^{\ell^{N-1}}} \pmod{\ell^N}\equiv F_{\ell^N}(q)\cdot\sum_{m\geq 0}{\beta^\prime(m)q^{\ell m}} \pmod{\ell^N}.$$
\end{proof}

\section{Applications of Theorem \ref{thA}}
\subsection{Plane partition congruences involving prime powers}
In \cite{gandhi} and \cite{periodic}, elementary combinatorial methods were used to prove some plane partition congruences modulo primes and prime powers. With less effort and a different technique, we apply Theorem \ref{thA} to reprove some of these congruences and establish new equivalences.

\begin{theorem} \label{power}
The following hold for all $n\geq 0,$
\begin{align}\label{p1}
pl_{4}(4n+3)\equiv 0\pmod{2}
\end{align}
\begin{align}\label{p2}
pl_{4}(4n)\equiv pl_{4}(4n+1)\equiv pl_{4}(4n+2)\pmod{2}
\end{align}
\begin{align}\label{p3}
pl_{8}(8n)+pl_{8}(8n+1) \equiv pl_{8}(8n+3)\pmod{2}
\end{align}
\begin{align}\label{p4}
pl_{8}(8n+5)\equiv pl_{8}(8n+6) \equiv pl_{8}(8n+7)\equiv 0 \pmod{2}
\end{align}
\begin{align}\label{p5}
pl_{9}(9n+1)\equiv pl_{9}(9n+8)\pmod{3}.
\end{align}
\end{theorem}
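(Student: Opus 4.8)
The plan is to realize each of the five congruences as a single instance (or a short chain of instances) of Theorem~\ref{thA}, in which the periodic series $A(q)$ is taken to be $F_L(q)$, where $L$ denotes the relevant row bound. Congruences \eqref{p1} and \eqref{p2} concern $pl_4$, so $L=4=2^2$ and $\ell=2$; \eqref{p3} and \eqref{p4} concern $pl_8$, so $L=8=2^3$ and $\ell=2$; and \eqref{p5} concerns $pl_9$, so $L=9=3^2$ and $\ell=3$. In every case Theorem~\ref{thA} is applied with its exponent $N=1$, that is, modulo the prime $\ell$ itself (the superscript in $\ell^{N}=L$ is the row bound, a different parameter).

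First I would apply the first (modulo $\ell$) statement of Lemma~\ref{lemma2} to write
$$PL_L(q)\equiv F_L(q)\cdot B(q)\pmod{\ell},\qquad B(q)=\sum_{m\geq 0}\beta(m)q^{Lm},$$
so that $B$ is supported on multiples of $L$ and $\beta(0)=1$. I then set $A(q)=F_L(q)$, $G(q)=PL_L(q)$, and $\delta=L$. With these choices the hypotheses of Theorem~\ref{thA} are met: $A$ is periodic modulo $\ell$ by Lemma~\ref{lemmaR} (as $F_L=R_{L-1}(1,2,\dots,L-1;q)$); the requirement that $\beta(m)\equiv 0\pmod{\ell}$ for every exponent $\not\equiv 0 \pmod{\delta}$ is automatic, since each exponent occurring in $B$ is a multiple of $L=\delta$; and $\beta(0)\equiv 1\pmod{\ell}$. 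It then remains to confirm $\delta\mid\pi_{\ell}(F_L)$ and to perform the finite check.

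Next I would compute the minimal periods from Lemma~\ref{lemmaR}. For $F_4$ modulo $2$ the generating multiset is $\{1,2,2,3,3,3\}$, giving $m_2=3$ and $b_2=3$ (because $\sum_{n}2^{\operatorname{ord}_2(n)}=8$), hence $\pi_2(F_4)=2^{3}\cdot 3=24$. For $F_8$ modulo $2$ one obtains $m_2=105$ and $b_2=6$, hence $\pi_2(F_8)=2^{6}\cdot 105=6720$. For $F_9$ modulo $3$ one obtains $m_3=280$ and $b_3=4$, hence $\pi_3(F_9)=3^{4}\cdot 280=22680$. In each case $\delta=L$ divides the period, so Theorem~\ref{thA} reduces each assertion to a congruence that need only be checked for $0\leq n<\pi_{\ell}(F_L)/\delta$: for $n<6$ in \eqref{p1} and \eqref{p2}, for $n<840$ in \eqref{p3} and \eqref{p4}, and for $n<2520$ in \eqref{p5}. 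These finitely many values of $pl_L$ modulo $\ell$ I would read off directly from the generating function \eqref{eq4}. The chained statements \eqref{p2} and \eqref{p4} I would split into pairwise congruences, each one application of the theorem, and the pure-divisibility statements in \eqref{p1} and \eqref{p4} I would obtain by taking the right-hand sum of Theorem~\ref{thA} to be empty.

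The conceptual work lies entirely in the setup: once the factorization $PL_L\equiv F_L\cdot B$ is established and $\delta\mid\pi_\ell(F_L)$ is verified, the statements follow from a bounded computation. The main practical obstacle is the size of that computation---most notably the $2520$ residues required for \eqref{p5} and the $840$ required for \eqref{p3} and \eqref{p4}---which is routine by machine but is what one must actually carry out. A secondary point requiring care is the treatment of the ``$\equiv 0$'' cases: these need the empty right-hand sum ($t=0$), which is not literally covered by the hypothesis that $s,t$ be positive and should be justified through the empty-sum reading of the periodicity argument underlying Theorem~\ref{thA}.
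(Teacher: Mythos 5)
Your proposal is correct, and its skeleton---factor the restricted plane partition generating function as a periodic series $A(q)$ times a series $B(q)$ supported modulo $\ell$ on multiples of $\delta=L$, then invoke Theorem~\ref{thA} together with a finite machine check---is the same as the paper's. The genuine difference is the decomposition: you take $A(q)=F_L(q)$ wholesale from Lemma~\ref{lemma2}, whereas the paper pushes extra factors of $F_L$ into $B(q)$, namely $\tfrac{1}{(1-q^2)^2}\equiv\tfrac{1}{1-q^4}\pmod 2$ for $pl_4$, $\tfrac{1}{(1-q^4)^4}\equiv\tfrac{1}{1-q^{16}}\pmod 2$ for $pl_8$, and $\tfrac{1}{(1-q^3)^3}\equiv\tfrac{1}{1-q^9}\pmod 3$ for $pl_9$; by Lemma~\ref{lemmaL} each of these is, modulo $\ell$, supported on multiples of $\delta$, so it can be absorbed into $B$. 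This shrinks the periodic factor and hence the verification range: for \eqref{p1}--\eqref{p2} the paper's $A=\tfrac{1}{(1-q)(1-q^3)^3}$ has $\pi_2(A)=12$, so it checks $n<3$ versus your $n<6$; for \eqref{p3}--\eqref{p4} dropping $(1-q^4)^4$ lowers $b_2$ from $6$ to $5$, giving period $2^5\cdot 105$ and a check of $n\le 420$ versus your $n<840$; for \eqref{p5} the two routes coincide at $n<2520$, since removing $(1-q^3)^3$ leaves $b_3=4$ unchanged. So your version buys uniformity (cite Lemma~\ref{lemma2} verbatim, no case-by-case refactoring) at the cost of an at most twofold longer computation; your numerical data (periods $24$, $6720$, $22680$ and bounds $6$, $840$, $2520$) are all correct. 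One further remark: your concern about the ``$\equiv 0$'' congruences is legitimate---the paper applies Theorem~\ref{thA} to \eqref{p1} and \eqref{p4} without comment even though its statement requires $s,t\ge 1$---but you can stay within the stated hypotheses instead of appealing to an empty-sum reading of the proof: modulo $\ell^N$ one has $\lambda(\delta n+a)\equiv 0$ if and only if $\lambda(\delta n+a)+\lambda(\delta n+a)\equiv\lambda(\delta n+a)$, which is an instance of the theorem with $s=2$, $t=1$, $a_1=a_2=b_1=a$.
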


We note that (\ref{p1}) and (\ref{p2}) are shown by Gandhi \cite{gandhi}, and \eqref{p4} is previously reported in \cite{periodic}, while (\ref{p3}) and (\ref{p5}) are new to the literature.

\begin{proof}
We note that
\begin{align*}
\sum_{n=0}^{\infty}{pl_{4}(n)q^n}&=\frac{1}{(1-q)(1-q^2)^2 (1-q^3)^3}\cdot \prod_{n=4}^{\infty}{\frac{1}{(1-q^n)^4}}\\
&=\left(\frac{1}{(1-q)(1-q^3)^3}\right)\cdot \left(\frac{1}{(1-q^2)^2}\cdot\prod_{n=4}^{\infty}{\frac{1}{(1-q^n)^4}}\right).
\end{align*}
By Lemma \ref{lemmaR},
\begin{align*}
A(q)&=\sum_{n=0}^{\infty}\alpha(n) q^n:=\frac{1}{(1-q)(1-q^3)^3}\\
    &=1+q+q^2+4q^3+4q^4+4q^5+10q^6+10q^7+10q^8+20q^9+20q^{10}+20q^{11}+35q^{12}+\cdots
\end{align*}
is periodic modulo 2 with minimal period $\pi_{2}(A)=12$. Also, we use Lemma \ref{lemmaL} to observe that
\begin{align*}
B(q)&=\sum_{n=0}^{\infty}\beta(n) q^n:=\frac{1}{(1-q^2)^2}\cdot \prod_{n=4}^{\infty}\frac{1}{(1-q^n)^4}\\
    &\equiv \frac{1}{(1-q^4)}\cdot\prod_{n=4}^{\infty}\frac{1}{(1-q^{4n})} \pmod{2}.
\end{align*}
Thus $\beta(0)=1$ and $\beta(n)\equiv 0\pmod{2}$ for all $n\not\equiv 0\pmod{4}$, and hence the series $B(q)$ and its coefficients satisfy the desired congruences conditions of Theorem \ref{thA}.

We see that the congruences 
$$pl_{4}(4n+3)\equiv 0\pmod{2},$$
$$pl_{4}(4n)\equiv pl_{4}(4n+1)\equiv pl_{4}(4n+2)\pmod{2}$$
hold for $n=0,1$ and $2$.  For $\ell=2, N=1,\delta=4$, we apply Theorem \ref{thA} and hence the equivalences (\ref{p1}) and (\ref{p2}) hold for all $n\geq 0$.

To prove the congruences (\ref{p3}) and (\ref{p4}), again we use Lemma \ref{lemmaL} to observe that
\begin{align*}
&\sum_{n=0}^{\infty}{pl_{8}(n)q^n}=\frac{1}{(1-q)(1-q^2)^2 (1-q^3)^3(1-q^4)^4(1-q^5)^5(1-q^6)^6(1-q^7)^7}\cdot \prod_{n=8}^{\infty}{\frac{1}{(1-q^n)^8}}\\
& \equiv \frac{1}{(1-q)(1-q^2)^2 (1-q^3)^3(1-q^5)^5(1-q^6)^6(1-q^7)^7}\cdot \left( \frac{1}{(1-q^{16})} \cdot \prod_{n=8}^{\infty}{\frac{1}{(1-q^{8n})}}\right) \pmod{2}.
\end{align*}
By Lemma \ref{lemmaR}, the quotient 
$$\sum_{n=0}^{\infty}{\alpha(n)q^n}:=\frac{1}{(1-q)(1-q^2)^2 (1-q^3)^3(1-q^5)^5(1-q^6)^6(1-q^7)^7}$$ 
is periodic modulo 2 with minimal period $2^5 \cdot 105$. Maple programming shows that the congruences
$$pl_{8}(8n)+pl_{8}(8n+1)\equiv pl_{8}(8n+3)\pmod{2},$$
$$pl_{8}(8n+5)\equiv pl_{8}(8n+6) \equiv pl_{8}(8n+7)\equiv 0 \pmod{2}$$
hold for all $0\leq n\leq \frac{2^5 \cdot 105}{2^3}.$  Thus for $\ell=2,N=1,\delta=8$,  Theorem \ref{thA} confirms the congruences (\ref{p3}) and (\ref{p4}) hold for all $n\geq 0$. 

To prove (\ref{p5}), we use the same method to see that
\begin{align*}
&\sum_{n=0}^{\infty}{pl_{9}(n)q^n}=\frac{1}{(1-q)(1-q^2)^2 (1-q^3)^3(1-q^4)^4(1-q^5)^5(1-q^6)^6(1-q^7)^7(1-q^8)^8}\cdot \prod_{n=9}^{\infty}{\frac{1}{(1-q^n)^9}}\\
& \equiv \frac{1}{(1-q)(1-q^2)^2(1-q^4)^4 (1-q^5)^5(1-q^6)^6(1-q^7)^7(1-q^8)^8}\cdot \left(\frac{1}{(1-q^9)}\cdot \prod_{n=9}^{\infty}{\frac{1}{(1-q^{9n})}}\right) \pmod{3}.
\end{align*}
Again, by Lemma \ref{lemmaR}, the quotient
$$\sum_{n=0}^{\infty}{\alpha(n)q^n}:=\frac{1}{(1-q)(1-q^2)^2(1-q^4)^4(1-q^5)^5(1-q^6)^6(1-q^7)^7(1-q^8)^8}$$
is periodic modulo 3 with minimal period $3^4\cdot 280$. Again by maple programming, we confirm that for all $0\leq n< \frac{3^4\cdot 280}{3^2},$
\begin{align*}
pl_{9}(9n+1)\equiv pl_{9}(9n+8)\pmod{3},
\end{align*}
as desired.
\end{proof}

\subsection{Overpartitions and plane overpartitions}
An {\it{overpartition}} of a positive integer $n$ is a partition of $n$ in which the first occurrence (equivalently, the last occurrence) of a part may be overlined. We denote the number of overpartition of $n$ by $\overline{p}(n)$. Since the overlined parts form a partition into distinct parts and the non-overlined parts form an ordinary partition, the generating function of overpartitions is given by
\begin{equation}\label{eq1}
\sum_{n=0}^{\infty}\overline{p}(n)q^{n}=\prod_{n=1}^{\infty} \frac{1+q^n}{1-q^n}= 1+2q+4q^{2}+8q^{3}+14q^{4}+ \cdots .
\end{equation}

A {\it{plane overpartition}} is a plane partition where  in each row the last occurrence of an integer can be overlined or not and all the other occurrences of this integer are not overlined, and furthermore in each column the first occurrence of an integer can be overlined or not and all the other occurrences of this integer are overlined. The total number of plane overpartitions of $n$ is denoted by $\overline{pl}(n)$ and the generating function of plane overpatitions \cite{vuletic2007} is given by
\begin{equation}\label{eq1.3}
\overline{PL}(q)=\sum_{n=0}^{\infty}\overline{pl}(n)q^n=\prod_{n=1}^{\infty} \frac{(1+q^n)^{n}}{(1-q^n)^{n}} .
\end{equation}

For example, the plane overpartitions for $n=3$ are as follows.
$$
{
\begin{tikzpicture}[scale=1]
	
		\boxWithLabel{0.5}{0}{4}{3}
		\boxWithLabel{0.5}{1}{4}{$\bar{3}$}
				
		\boxWithLabel{0.5}{2}{4}{2}
		\boxWithLabel{0.5}{2.5}{4}{1}

		\boxWithLabel{0.5}{3.5}{4}{$\bar{2}$}
		\boxWithLabel{0.5}{4}{4}{1}

		\boxWithLabel{0.5}{5}{4}{2}
		\boxWithLabel{0.5}{5.5}{4}{$\bar{1}$}
		
		\boxWithLabel{0.5}{6.5}{4}{$\bar{2}$}
		\boxWithLabel{0.5}{7}{4}{$\bar{1}$}
		
		\boxWithLabel{0.5}{8}{4}{1}
		\boxWithLabel{0.5}{8.5}{4}{1}
		\boxWithLabel{0.5}{9}{4}{1}
		
		\boxWithLabel{0.5}{10}{4}{1}
		\boxWithLabel{0.5}{10.5}{4}{1}
		\boxWithLabel{0.5}{11}{4}{$\bar{1}$}

		\boxWithLabel{0.5}{2}{3}{2}
		\boxWithLabel{0.5}{2}{2.5}{1}
		
		\boxWithLabel{0.5}{3.5}{3}{$\bar{2}$}
		\boxWithLabel{0.5}{3.5}{2.5}{1}
		
		\boxWithLabel{0.5}{5}{3}{2}
		\boxWithLabel{0.5}{5}{2.5}{$\bar{1}$}
		
		\boxWithLabel{0.5}{6.5}{3}{$\bar{2}$}
		\boxWithLabel{0.5}{6.5}{2.5}{$\bar{1}$}

		\boxWithLabel{0.5}{8}{3}{1}
	    \boxWithLabel{0.5}{8.5}{3}{1}
		\boxWithLabel{0.5}{8}{2.5}{$\bar{1}$}
		
		\boxWithLabel{0.5}{10}{3}{1}
	    \boxWithLabel{0.5}{10.5}{3}{$\bar{1}$}
		\boxWithLabel{0.5}{10}{2.5}{$\bar{1}$}

		\boxWithLabel{0.5}{8}{1.5}{1}
	    \boxWithLabel{0.5}{8}{1}{$\bar{1}$}
		\boxWithLabel{0.5}{8}{0.5}{$\bar{1}$}

		\boxWithLabel{0.5}{10}{1.5}{$\bar{1}$}
	    \boxWithLabel{0.5}{10}{1}{$\bar{1}$}
		\boxWithLabel{0.5}{10}{0.5}{$\bar{1}$}
		
	\end{tikzpicture}
	}
$$

Thus, $\overline{pl}(3)=16.$ 

A {\it{$k$-rowed plane overpartition}} of an integer $n$ is a plane overpartition with at most $k$ rows. The total number of the $k$-rowed plane overpartitions of $n$ is denoted by $\overline{pl}_{k}(n)$.
To get the generating function of $k$-rowed plane overpartitions, we recall the following theorem.

\begin{theorem}[\cite{vuletic2007}]{}\label{thm5}
The generating function for plane overpartitions which fit in an $k \times n$ box is $$\prod_{i=1}^{k}{\prod_{j=1}^{n}{\frac{1+q^{i+j-1}}{1-q^{i+j-1}}}}.$$
\end{theorem}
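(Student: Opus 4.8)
The plan is to obtain this box identity as the $t=-1$ specialization of a one-parameter (Hall--Littlewood) refinement of MacMahon's box formula. The first, purely combinatorial, step is to strip the bars: a plane overpartition is an ordinary plane partition $\Pi$ together with a choice of which admissible cells to overline, and the bar rules (last occurrence in each row, first occurrence in each column) force the overlineable cells to organize into $c(\Pi)$ independent blocks, one for each maximal rook-connected set of equal entries of $\Pi$. Consequently the number of plane overpartitions with underlying plane partition $\Pi$ is exactly $2^{c(\Pi)}$, so that
\[
\sum_{\overline{\Pi}}q^{|\overline{\Pi}|}=\sum_{\Pi}2^{c(\Pi)}q^{|\Pi|},
\]
both sums ranging over objects that fit in the $k\times n$ box. (As a check, letting $k,n\to\infty$ sends the right-hand side to \eqref{eq1.3}.) The point is that $2^{c(\Pi)}$ is precisely the value at $t=-1$ of the connected-component weight $A_\Pi(t)$ appearing in Vuleti\'c's $t$-deformed box formula, so it suffices to prove that deformed formula and then set $t=-1$.

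I would prove the $t$-deformed formula by the transfer-matrix (vertex-operator) method of Okounkov--Reshetikhin, in its Hall--Littlewood form. Reading $\Pi$ along diagonals produces an interlacing chain of partitions $\varnothing\prec\cdots\prec\lambda\succ\cdots\succ\varnothing$, and confining $\Pi$ to the $k\times n$ box fixes the number of ascending and descending links and the monomial $q^{i+j-1}$ weighting each link. Encoding ascents and descents by the Hall--Littlewood vertex operators $\Gamma_+$ and $\Gamma_-$ turns the weighted box sum into the vacuum matrix element
\[
\langle\varnothing|\,\Gamma_+(x_1)\cdots\Gamma_+(x_n)\,\Gamma_-(y_1)\cdots\Gamma_-(y_k)\,|\varnothing\rangle,
\]
where the $x$'s and $y$'s are the half-integer powers $q^{\,j-1/2}$ and $q^{\,i-1/2}$. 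The engine of the computation is the commutation relation
\[
\Gamma_+(x)\,\Gamma_-(y)=\frac{1-t\,xy}{1-xy}\,\Gamma_-(y)\,\Gamma_+(x),
\]
the $t$-analogue of the Schur relation used for ordinary plane partitions; commuting every $\Gamma_-$ to the left past every $\Gamma_+$ and then using $\Gamma_+(x)|\varnothing\rangle=|\varnothing\rangle$ and $\langle\varnothing|\Gamma_-(y)=\langle\varnothing|$ collapses the matrix element to $\prod\frac{1-t\,xy}{1-xy}$, which with the chosen exponents is $\prod_{i=1}^{k}\prod_{j=1}^{n}\frac{1-t\,q^{i+j-1}}{1-q^{i+j-1}}$.

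Setting $t=-1$ then converts each factor into $\frac{1+q^{i+j-1}}{1-q^{i+j-1}}$ and, by the first paragraph, the left-hand side into the plane-overpartition generating function, which is the theorem. I expect the real obstacle to lie in the second step, specifically in matching the two combinatorial models: one must verify that the Hall--Littlewood operators faithfully record the interlacing together with the $t$-statistic, and, crucially, that at $t=-1$ this statistic reduces to the bar-count $c(\Pi)$ so that $A_\Pi(-1)=2^{c(\Pi)}$ as claimed above; the normal-ordering and the vacuum boundary conditions are then routine. A self-contained alternative to the operator calculus would be a Lindstr\"om--Gessel--Viennot argument: realize a box plane overpartition as a family of non-intersecting, bar-colored lattice paths, express the weighted count as a determinant, and evaluate it as a deformed Cauchy determinant; here too the crux is checking that the coloring contributes the numerator $1+q^{i+j-1}$ on each relevant step, the product then following exactly as in the $t$-deformed computation.
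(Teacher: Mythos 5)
The paper itself offers no proof of this statement: it is quoted verbatim from Vuleti\'c \cite{vuletic2007}, and the route you sketch --- the Hall--Littlewood $t$-deformation of MacMahon's box formula, proved by vertex-operator (or LGV) calculus and then specialized at $t=-1$ --- is essentially the argument of that cited source. So your overall strategy is the right one; the problem lies in the single combinatorial step you carry out yourself rather than defer.

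That step is false as stated. It is not true that every plane partition $\Pi$ underlies exactly $2^{c(\Pi)}$ plane overpartitions. If $\Pi$ contains a $2\times 2$ block of equal entries, consider the bottom-left cell of that block: it has an equal entry to its right, so the row rule forces it to be non-overlined, and an equal entry above it, so the column rule forces it to be overlined --- hence $\Pi$ admits no legal overlining at all. (Conversely, if some cell has equal neighbors both to its right and above, monotonicity forces the diagonal neighbor to be equal too, producing a $2\times 2$ block; so this is exactly the obstruction.) The correct count is therefore $2^{c(\Pi)}$ when every connected component of equal entries is a border strip, i.e.\ contains no $2\times 2$ block --- in that case each component has a unique free cell, the one that is simultaneously last in its row and first in its column --- and $0$ otherwise. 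Consequently your displayed identity $\sum_{\overline{\Pi}}q^{|\overline{\Pi}|}=\sum_{\Pi}2^{c(\Pi)}q^{|\Pi|}$ overcounts the right-hand side (already in the coefficient of $q^4$, from the $2\times 2$ array of $1$'s, whenever $k,n\geq 2$), and likewise $A_\Pi(-1)\neq 2^{c(\Pi)}$ in general: Vuleti\'c's weight is a product of factors $(1-t^{h})$ over border components of level $h$, so at $t=-1$ it vanishes exactly when some component has depth $\geq 2$, and this vanishing is what makes the $t=-1$ specialization match the overpartition count. The repair is to replace $2^{c(\Pi)}$ throughout by $A_\Pi(-1)$, equivalently to restrict the sum to plane partitions with no $2\times 2$ block of equal entries; with that correction, and granting the Hall--Littlewood commutation machinery whose verification you explicitly defer, the argument goes through and reproduces the cited proof.
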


\begin{lemma}\label{lemma3}
For a fixed positive integer $k$, the generating function for $k$-rowed plane overpartitions is given by
\begin{equation}\label{eq2}
\overline{PL}_{k}(q):=\sum_{n=0}^{\infty}{\overline{pl}_{k}(n)q^{n}}
=\prod_{n=1}^{\infty}{\frac{(1+q^{n})^{\small \mbox{min}\{k,n\}}}{(1-q^{n})^{\small \mbox{min}\{k,n\}}}}.
\end{equation}
\end{lemma}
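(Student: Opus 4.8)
The plan is to mirror the derivation of the $r$-rowed plane partition generating function in \eqref{eq4}, replacing MacMahon's box formula \eqref{eq3} with the box formula from Theorem \ref{thm5}. First I would fix $k$ and pass from the generating function for plane overpartitions fitting in a $k \times n$ box to the generating function for all $k$-rowed plane overpartitions by letting the number of columns $n \to \infty$. This is a limiting statement in $\mathbb{Z}[[q]]$: every plane overpartition of $N$ has positive integer parts, so it occupies at most $N$ columns, and therefore any $k$-rowed plane overpartition counted by $\overline{pl}_{k}(N)$ already fits in a $k \times N$ box. Hence the coefficient of $q^{N}$ in $\prod_{i=1}^{k}\prod_{j=1}^{n}\frac{1+q^{i+j-1}}{1-q^{i+j-1}}$ stabilizes to $\overline{pl}_{k}(N)$ once $n \geq N$, so that
$$\overline{PL}_{k}(q) = \prod_{i=1}^{k}\prod_{j=1}^{\infty}\frac{1+q^{i+j-1}}{1-q^{i+j-1}}$$
as formal power series.

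Next I would collapse this double product into the single product claimed in the lemma by grouping factors according to their common exponent $m := i+j-1$. For a fixed $m \geq 1$, the pairs $(i,j)$ with $1 \leq i \leq k$ and $j \geq 1$ satisfying $i+j-1 = m$ are exactly those with $j = m-i+1$, where the requirement $j \geq 1$ forces $i \leq m$; combined with $1 \leq i \leq k$ this gives $1 \leq i \leq \min\{k,m\}$. Thus the factor $\frac{1+q^{m}}{1-q^{m}}$ occurs with multiplicity $\min\{k,m\}$, and
$$\prod_{i=1}^{k}\prod_{j=1}^{\infty}\frac{1+q^{i+j-1}}{1-q^{i+j-1}} = \prod_{m=1}^{\infty}\left(\frac{1+q^{m}}{1-q^{m}}\right)^{\min\{k,m\}} = \prod_{n=1}^{\infty}\frac{(1+q^{n})^{\min\{k,n\}}}{(1-q^{n})^{\min\{k,n\}}},$$
which is the asserted generating function.

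The computation is essentially routine, and the argument runs parallel to the passage from \eqref{eq3} to \eqref{eq4}, the only difference being the extra numerator factor $1+q^{i+j-1}$ carried along in each term. The sole point needing care is the interchange of the $n \to \infty$ limit with the power series expansion, which is justified by the column bound above; I do not anticipate any serious obstacle.
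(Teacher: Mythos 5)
Your proposal is correct and follows essentially the same route as the paper: let the number of columns tend to infinity in the box formula of Theorem \ref{thm5}, then collapse the double product by grouping factors with common exponent $i+j-1=m$, which occurs with multiplicity $\min\{k,m\}$. Your version is somewhat more careful than the paper's (you justify the formal-power-series limit via the bound on the number of columns, and you count the multiplicity explicitly), but the underlying argument is identical.
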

\begin{proof}
By fixing $k$ and letting $n\rightarrow\infty$ in Theorem \ref{thm5}, we get
$$\prod_{i=1}^{k}{\prod_{j=1}^{\infty}{\frac{1+q^{i+j-1}}{1-q^{i+j-1}}}}=\frac{(1+q)(1+q^{2})^{2}\cdots (1-q^{k-1})^{k-1}}{(1-q)(1-q^{2})^{2}\cdots (1-q^{k-1})^{k-1}}\cdot \prod_{n=k}^{\infty} \frac{(1+q^{n})^{k}}{(1-q^{n})^{k}}=\prod_{n=1}^{\infty}{\frac{(1+q^{n})^{\small \mbox{min}\{k,n\}}}{(1-q^{n})^{\small \mbox{min}\{k,n\}}}}.$$
\end{proof}

Also we note that  for a prime $\ell$ and a positive integer $N$, the restricted plane overpartition generating function $\overline{PL}_{\ell^N}(q)$ is of the form $A(q)\cdot B(q)$ where $A(q)$ and $B(q)$ are described in Theorem \ref{thA}.
\begin{lemma}
For a prime $\ell$ and a positive integer $N$, then $$\overline{PL}_{\ell^N}(q)\equiv R_{k}(m_{1},\dots,m_{k};q)\cdot\sum_{m\geq 0}{\beta(m)q^{\ell^N m}} \pmod{\ell},$$ 
$$\overline{PL}_{\ell^N}(q)\equiv R_{k^\prime}(m_{1}^\prime,\dots,m_{k^\prime}^\prime;q)\cdot\sum_{m\geq 0}{\beta^\prime(m)q^{\ell m}} \pmod{\ell^N},$$ 
for some positive integers $k,k^\prime$ and nonnegative integers $ m_{1},\dots,m_{k},m_{1}^\prime,\dots,m_{k^\prime}^\prime, \beta(m),\beta^\prime(m).$
\end{lemma}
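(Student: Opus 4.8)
The plan is to follow the template of Lemma \ref{lemma2}, the only genuinely new step being to dispose of the numerator factors $(1+q^n)$ in the generating function of Lemma \ref{lemma3} so that $\overline{PL}_{\ell^N}(q)$ is exhibited as a \emph{pure reciprocal} product; once this is done the Frobenius-type reductions of Lemma \ref{lemmaL} and Lemma \ref{lemmaL2} apply exactly as before. First I would start from $\overline{PL}_{\ell^N}(q)=\prod_{n\geq1}\frac{(1+q^n)^{\min\{\ell^N,n\}}}{(1-q^n)^{\min\{\ell^N,n\}}}$ and substitute $1+q^n=\frac{1-q^{2n}}{1-q^n}$, rewriting the whole product as $\prod_{m\geq1}(1-q^m)^{-E(m)}$, where $E(m)=2\min\{\ell^N,m\}-[2\mid m]\,\min\{\ell^N,m/2\}$ (here $[2\mid m]$ is $1$ if $m$ is even and $0$ otherwise).

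The first claim to establish is that $E(m)\geq0$ for every $m$, i.e.\ that every numerator factor is absorbed into the denominator. The factor $(1-q^{2n})$ occurs in the numerator only with exponent $\min\{\ell^N,n\}$ and in the denominator only with exponent $2\min\{\ell^N,2n\}$, and $2\min\{\ell^N,2n\}\geq2\min\{\ell^N,n\}\geq\min\{\ell^N,n\}$; for odd $m$ only the denominator contributes. Thus $\overline{PL}_{\ell^N}(q)$ is a reciprocal product with nonnegative integer exponents $E(m)$. A short case check then records the stabilized values $E(m)=2\ell^N$ for odd $m>2\ell^N$ and $E(m)=\ell^N$ for even $m>2\ell^N$, so that $\ell^N\mid E(m)$ for all $m>2\ell^N$.

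With this in hand I would write $E(m)=r(m)+\ell^N c(m)$ with $0\le r(m)<\ell^N$ and $c(m)=\lfloor E(m)/\ell^N\rfloor\geq0$, noting $r(m)=0$ for $m>2\ell^N$, and factor $\overline{PL}_{\ell^N}(q)=\big(\prod_m(1-q^m)^{-r(m)}\big)\cdot\big(\prod_m(1-q^m)^{-\ell^N c(m)}\big)$. The first factor is a finite product, hence equals $R_k(r(1),\dots,r(k);q)$ with $k\leq2\ell^N$, of the required reciprocal shape. For the modulus $\ell$ statement I would apply Lemma \ref{lemmaL} to the second factor, using that congruent units in $\mathbb{Z}[[q]]$ have congruent inverses, to obtain $(1-q^m)^{-\ell^N c(m)}\equiv(1-q^{m\ell^N})^{-c(m)}\pmod{\ell}$; the product over $m$ is then a power series in $q^{\ell^N}$ with nonnegative integer coefficients, giving $B(q)=\sum_m\beta(m)q^{\ell^N m}$ with $\beta(0)=1$. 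For the modulus $\ell^N$ statement I would instead invoke Lemma \ref{lemmaL2} on the same factor, getting $(1-q^m)^{-\ell^N c(m)}\equiv(1-q^{m\ell})^{-\ell^{N-1}c(m)}\pmod{\ell^N}$, a power series in $q^\ell$, giving $B^\prime(q)=\sum_m\beta^\prime(m)q^{\ell m}$; the finite factor $R_{k^\prime}$ is the same one.

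I expect the main obstacle to be precisely the opening step: confirming the global cancellation that forces $E(m)\geq0$ and the stabilization that forces $\ell^N\mid E(m)$ for large $m$. This is what is \emph{not} present in Lemma \ref{lemma2}, where the prefactor $F_{\ell^N}(q)$ is already a reciprocal and the tail exponent is constantly $\ell^N$; here the presence of the $(1+q^n)$ numerators and the transition region $\ell^N<m<2\ell^N$ (where $E(m)$ is positive but not a multiple of $\ell^N$) must be handled by the bookkeeping above. Once the nonnegativity and eventual divisibility of $E(m)$ are secured, the remainder of the argument is parallel to Lemma \ref{lemma2} and the coefficients $\beta(m),\beta^\prime(m)$ are automatically nonnegative integers since each $c(m)\geq0$.
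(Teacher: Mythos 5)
Your proposal is correct and takes essentially the same route as the paper: both eliminate the numerators via $1+q^n=\frac{1-q^{2n}}{1-q^n}$, rewrite $\overline{PL}_{\ell^N}(q)$ as a pure reciprocal product split into a finite $R_k$ prefactor and a tail whose exponents are multiples of $\ell^N$, and then finish with Lemma \ref{lemmaL} and Lemma \ref{lemmaL2}. If anything, your explicit exponent function $E(m)$, the nonnegativity/stabilization check, and the division $E(m)=r(m)+\ell^N c(m)$ make uniform and rigorous what the paper only sketches (its unspecified exponents $m_i$, $r_i$, $t_i$, $n_i$ and the assertion that one can ``repeat the same process for $N>1$'').
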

\begin{proof}
First, let $N=1$. By Lemma \ref{lemma3}, the generating function of $\ell$-rowed plane overpartitions is given by
\begin{align*}
\overline{PL}_{\ell}(q)&=\frac{(1+q)(1+q^{2})^{2}\cdots (1-q^{\ell-1})^{\ell-1}}{(1-q)(1-q^{2})^{2}\cdots (1-q^{\ell-1})^{\ell-1}}\cdot \prod_{n\geq \ell} \frac{(1+q^{n})^{\ell}}{(1-q^{n})^{\ell}}\\
&=\frac{(1+q)(1+q^{2})^{2}\cdots {(1-q^{\frac{\ell-1}{2}})^{\frac{\ell-1}{2}}}} {(1-q)(1-q^{2})^{2}\cdots (1-q^{(\ell-1)})^{\ell-1}}\cdot (1+q^{\frac{\ell-1}{2}+1})^{\frac{\ell-1}{2}+1}\cdots (1+q^{\ell-1})^{\ell-1}\cdot \prod_{n\geq \ell} \frac{(1+q^{n})^{\ell}}{(1-q^{n})^{\ell}}.
\end{align*}
By factorizing the denominator of the front quotient, we note that there are some nonnegative integers $m_{1},\dots, m_{\ell-1}$  so that
\begin{align*}
\frac{(1+q)(1+q^{2})^{2}\cdots {(1-q^{\frac{\ell-1}{2}})^{\frac{\ell-1}{2}}}} {(1-q)(1-q^{2})^{2}\cdots (1-q^{(\ell-1)})^{\ell-1}}=\frac{1}{(1-q)^{m_{1}}(1-q^{2})^{m_{2}}\cdots (1-q^{\ell-1})^{m_{\ell-1}}}.
\end{align*}
Furthermore,
\begin{align*}
&(1+q^{\frac{\ell-1}{2}+1})^{\frac{\ell-1}{2}+1}\cdots (1+q^{\ell-1})^{\ell-1}\cdot  \prod_{n\geq \ell} \frac{(1+q^{n})^{\ell}}{(1-q^{n})^{\ell}}\\
&=(1+q^{\frac{\ell-1}{2}+1})^{\frac{\ell-1}{2}+1}\cdots (1+q^{\ell-1})^{\ell-1}\cdot \frac{(1+q^{\ell})^{\ell}(1+q^{\ell+1})^{\ell} \cdots }{(1-q^{\ell})^{\ell}(1-q^{\ell+1})^{\ell}\cdots (1-q^{2(\ell-1)})^{\ell}\cdots (1-q^{2\ell})^{\ell}\cdots }\\
&= \left(\frac{(1+q^{\frac{\ell+1}{2}})^{\frac{\ell+1}{2}}}{(1-q^{\ell+1})^{\ell}}\cdots \frac{(1+q^{\ell-1})^{\ell-1}}{(1-q^{2(\ell-1)})^{\ell}} \right)
\cdot \left(\frac{(1+q^{\ell})^{\ell}(1-q^{\ell+1})^{\ell} \cdots}{(1-q^{\ell})^{\ell}(1-q^{\ell+2})^{\ell}\cdots(1-q^{2\ell})^{\ell}\cdots(1-q^{2(\ell+1)})^{\ell}\cdots}\right)\\
&=\frac{1}{(1-q^{t_{1}})^{r_{1}}\cdots (1-q^{t_{j}})^{r_{j}}}\cdot \prod_{i\geq 1}{\frac{1}{(1-q^{n_{i}})^{\ell}}},
\end{align*}
for some nonnegative integers $r_{i}$ and positive integers $t_{i}$ and $n_{i}$. Therefore,
\begin{align*}
\overline{PL}_{\ell}(q)&=\frac{1}{(1-q)^{m_{1}}(1-q^{2})^{m_{2}}\cdots (1-q^{\ell-1})^{m_{\ell-1}}(1-q^{t_{1}})^{r_{1}}\cdots (1-q^{t_{j}})^{r_{j}}}\cdot \prod_{i\geq 1}{\frac{1}{(1-q^{n_{i}})^{\ell}}}.
\end{align*}
We reindex $m_{i}$ and $r_{i}$ so that
\begin{align*}
\overline{PL}_{\ell}(q)&=\frac{1}{(1-q)^{m_{1}}(1-q^{2})^{m_{2}}\cdots (1-q^k)^{m_{k}}}\cdot \prod_{i\geq 1}{\frac{1}{(1-q^{n_{i}})^{\ell}}}\\
&=R_{k}(m_{1},\dots,m_{k};q)\cdot \prod_{i\geq 1}{\frac{1}{(1-q^{n_{i}})^{\ell}}}.
\end{align*}
We can repeat the same process for $N>1$ to obtain
\begin{align*}
\overline{PL}_{\ell^N}(q)&=R_{k}(m_{1},\dots,m_{k};q)\cdot \prod_{i\geq 1}{\frac{1}{(1-q^{n_{i}})^{\ell^N}}}.
\end{align*}
Using Lemma \ref{lemmaL} and Lemma \ref{lemmaL2}, the rest follows.
\end{proof}
In particular, for $\ell=2,3,5$, we have the following generating functions, 

\[\overline{PL}_{\ell}(q)=\begin{cases}
      \frac{1}{(1-q^{2})}\cdot \left( \frac{1}{(1-q)^{2}}\cdot \prod_{n=2}^{\infty}{ \frac{(1+q^{n})^{2}}{(1-q^{n+1})^{2}}}\right) & \mbox{if}\; \ell=2\\
       \frac{1}{(1-q)^{2}(1-q^{4})}\cdot \left( \frac{1}{(1-q^2)^{3}(1-q^3)^{3}}\cdot\prod_{n=3}^{\infty}{\frac{(1+q^n)^{3}}{(1-q^{n+2})^{3}}}\right)  & \mbox{if}\; \ell=3 \\
       \frac{1}{(1-q)^{2}(1-q^2)^{3}(1-q^3)(1-q^4)^{2}(1-q^{8})^2}\cdot \left( \frac{1}{(1-q^3)^{5}(1-q^4)^{5}(1-q^5)^5 (1-q^7)^5}\cdot \prod_{n=5}^{\infty}{\frac{(1+q^n)^{5}}{(1-q^{n+3})^5}}\right)  & \mbox{if}\; \ell=5 \\
   \end{cases}
\]

Note that
\begin{align*}
\sum_{n=0}^{\infty}{\overline{pl}_{k}(n)q^{n}}&
=\prod_{n=1}^{\infty}{\frac{(1+q^{n})^{\small \mbox{min}\{k,n\}}}{(1-q^{n})^{\small \mbox{min}\{k,n\}}}}= \prod_{n=1}^{\infty}{\left(1+\frac{2q^{n}}{1-q^n}\right)^{\small \mbox{min}\{k,n\}}}\\
&\equiv 1 \pmod{2}.
\end{align*}
Thus, 
$$\overline{pl}_{k}(n)\equiv 0\pmod{2},$$
for any $k,n\geq 1.$
\begin{theorem}\label{thOverPartition}
The following holds for all $n\geq 0$,
\begin{align}\label{ov1}	\overline{pl}_{4}(4n+1)+\overline{pl}_{4}(4n+2)+\overline{pl}_{4}(4n+3)\equiv 0\pmod{4}
\end{align}
\end{theorem}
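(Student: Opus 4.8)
The plan is to reduce the full generating function $\overline{PL}_4(q)$ modulo $4$ to an explicit closed form, in exactly the same spirit as the computation just above that reduced $\overline{pl}_k(n)$ modulo $2$. The engine is the elementary congruence $(1+2x)^4\equiv 1\pmod 4$ for any $x\in\mathbb{Z}[[q]]$, together with its lower-power analogues; once this collapse is in hand the congruence falls out by a one-line counting argument, with no need for the bounded periodicity check of Theorem \ref{thA}.

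First I would write, using Lemma \ref{lemma3},
$$\overline{PL}_4(q)=\prod_{n\geq 1}\left(\frac{1+q^n}{1-q^n}\right)^{\min\{4,n\}}=\prod_{n\geq 1}\left(1+\frac{2q^n}{1-q^n}\right)^{\min\{4,n\}},$$
and set $x_n:=q^n/(1-q^n)=\sum_{k\geq 1}q^{nk}$. Expanding by the binomial theorem modulo $4$ gives $(1+2x_n)^2\equiv 1$, $(1+2x_n)^3\equiv 1+2x_n$, and $(1+2x_n)^4\equiv 1\pmod 4$, since every term $\binom{j}{i}(2x_n)^i$ with $i\geq 2$ carries a factor $2^i$ divisible by $4$, and the $i=1$ term $\binom{j}{1}2x_n$ also vanishes modulo $4$ precisely when $j\in\{2,4\}$. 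Hence only the factors whose exponent $\min\{4,n\}$ lies in $\{1,3\}$ survive, namely $n=1$ and $n=3$, and
$$\overline{PL}_4(q)\equiv(1+2x_1)(1+2x_3)\equiv 1+2x_1+2x_3\pmod 4.$$

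Then I would read off the coefficients. Since $2x_1=2\sum_{k\geq 1}q^k$ contributes $2$ to every positive exponent while $2x_3=2\sum_{k\geq 1}q^{3k}$ contributes an extra $2$ exactly at multiples of $3$, for every $m\geq 1$ one obtains $\overline{pl}_4(m)\equiv 2\pmod 4$ when $3\nmid m$ and $\overline{pl}_4(m)\equiv 0\pmod 4$ when $3\mid m$. Finally, the three arguments $4n+1,4n+2,4n+3$ are consecutive integers, so exactly one of them is divisible by $3$; that term is $\equiv 0$ and the other two are $\equiv 2$, whence
$$\overline{pl}_4(4n+1)+\overline{pl}_4(4n+2)+\overline{pl}_4(4n+3)\equiv 2+2+0\equiv 0\pmod 4$$
for all $n\geq 0$.

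The hard part will be the modulo-$4$ collapse of the tail: the crux is verifying carefully that every factor with $\min\{4,n\}\in\{2,4\}$ reduces to $1$, so that the infinite product genuinely truncates to the two surviving factors. This rests on the special interaction between the exponent and the modulus (the powers of two in the binomial expansion all vanish modulo $4$), a cancellation that would not be available for a general modulus. I emphasize that this route is self-contained and sidesteps Theorem \ref{thA} entirely: because $\overline{PL}_4(q)$ collapses to an explicit form modulo $4$, no finite computation over a period is required, mirroring the mod-$2$ vanishing $\overline{pl}_k(n)\equiv 0$ established immediately before the theorem.
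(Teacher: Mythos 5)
Your proof is correct, but it takes a genuinely different route from the paper. The paper proves \eqref{ov1} as an application of its main result, Theorem \ref{thA}: it factors $\overline{PL}_4(q)=A(q)\cdot B(q)$ with $A(q)=\frac{1}{(1-q)^2(1-q^2)^3(1-q^3)^6(1-q^6)}$, which is periodic modulo $4$ with minimal period $2^5\cdot 3$ by Lemma \ref{lemmaR}, and $B(q)\equiv(1+2q^{12}+q^{24})(1+q^4)^2\pmod 4$, then verifies the congruence by a Maple computation for all $0\le n\le \pi_4(A)/4$ and invokes Theorem \ref{thA} to extend it to all $n\ge 0$. You instead collapse the entire generating function modulo $4$: since $(1+2x)^2\equiv(1+2x)^4\equiv 1\pmod 4$ and $(1+2x)^3\equiv 1+2x\pmod 4$, only the $n=1$ and $n=3$ factors survive, giving the exact reduction $\overline{PL}_4(q)\equiv 1+2\sum_{k\ge 1}q^k+2\sum_{k\ge 1}q^{3k}\pmod 4$, i.e.\ $\overline{pl}_4(m)\equiv 2\pmod 4$ when $3\nmid m$ and $\overline{pl}_4(m)\equiv 0\pmod 4$ when $3\mid m$, for $m\ge 1$; the theorem then follows because exactly one of the three consecutive integers $4n+1,4n+2,4n+3$ is a multiple of $3$. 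The truncation of the infinite product, which you flag as the delicate step, is in fact routine: each coefficient of $q^m$ involves only the finitely many factors with $n\le m$, and reduction modulo $4$ is a ring homomorphism, so a product of factors each congruent to $1$ is congruent to $1$. What each approach buys: yours is self-contained, requires no computer verification, and proves a strictly sharper statement (the full residue of $\overline{pl}_4(m)$ modulo $4$, from which further congruences such as $\overline{pl}_4(3n)\equiv 0\pmod 4$ for $n\ge 1$ follow immediately), but it depends crucially on the factor $2$ supplied by the overpartition numerators together with the modulus being a power of $2$; the paper's route is less elementary and computationally heavier, but it illustrates the general periodicity machinery of Theorem \ref{thA}, which still applies in settings (such as the plane partition congruences of Theorem \ref{power}, or odd prime-power moduli) where no such collapse of the generating function is available.
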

\begin{proof} 
Observe that
\begin{align*}
\overline{PL}_{4}(q)&=\left(\frac{(1+q)(1+q^2)^2(1+q^3)^3}{(1-q)(1-q^2)^2(1-q^3)^3} \right)\cdot \left( \prod_{n=4}^{\infty}{\frac{(1+q^{n})^{4}}{(1-q^{n})^{4}}}\right)\\
&=\left(\frac{1}{(1-q)(1-q^3)^3}\cdot\frac{(1+q)}{(1-q^2)^2}\cdot \frac{(1+q^2)^2}{(1-q^4)^4}\cdot\frac{(1+q^3)^3}{(1-q^6)^4}\right)\cdot\\
&\;\;\;\; \left((1+q^4)^4(1+q^6)^4 \cdot\frac{(1+q^5)^4}{(1-q^{5})^4}\cdot \prod_{n=7}^{\infty}{\frac{(1+q^n)^4}{(1-q^{n})^{4}}}\right)\\
&=\left(\frac{1}{(1-q)^2(1-q^2)^3(1-q^3)^6(1-q^6)}\right) \cdot\\
&\;\;\;\;\; \left((1+q^6)^4  \cdot\frac{(1+q^4)^4 (1+q^5)^4}{(1-q^4)^2 (1-q^{5})^4}\cdot \prod_{n=7}^{\infty}{\frac{(1+q^n)^4}{(1-q^{n})^4}}\right).
\end{align*}
Note that for all $n\geq 1$,
\begin{align*}
\frac{(1+q^n)^4}{(1-q^n)^4}=\left(1+\frac{2q^n}{1-q^n}\right)^4 \equiv 1 \pmod{4}.
\end{align*}
Therefore,
\begin{align*}
\frac{(1+q^5)^4}{(1-q^{5})^4}\cdot \prod_{n=7}^{\infty}{\frac{(1+q^n)^4}{(1-q^{n})^{4}}}\equiv 1 \pmod{4}.
\end{align*}
Also, we note that,
\begin{align*}
(1+q^6)^4 &\equiv 1+2q^{12}+q^{24} \pmod{4},\\
\frac{(1+q^4)^4}{(1-q^4)^2} &\equiv (1+q^4)^2 \pmod{4}.
\end{align*}
So, we let
\begin{align*}
A(q)&=\sum_{n=0}^{\infty}{\alpha(n)q^n}:=\frac{1}{(1-q)^2(1-q^2)^3(1-q^3)^6(1-q^6)},\\
B(q)&=\sum_{n=0}^{\infty}{\beta(n)q^n}:=(1+q^6)^4 \cdot\frac{(1+q^4)^4(1+q^5)^4 }{(1-q^4)^2 (1-q^{5})^4}\cdot \prod_{n=7}^{\infty}{\frac{(1+q^n)^4}{(1-q^{n})^{4}}}\\
&\equiv (1+2q^{12}+q^{24})(1+q^4)^2\pmod{4}.
\end{align*}
Then by Lemma \ref{lemmaR},
$$A(q)=R_{6}(2,3,6,0,0,1)$$
is periodic modulo 4 with minimal period $\pi_{4}(A)=2^5 \cdot 3$. By a calculation in Maple, we note that
$$\overline{pl}_{4}(4n+1)+\overline{pl}_{4}(4n+2)+\overline{pl}_{4}(4n+3)\equiv 0\pmod{4},$$
for all $0\leq n\leq \pi_{4}(A)/4.$
Hence letting $\ell=N=2, \delta=4$ and applying Theorem \ref{thA}, the congruence (\ref{ov1}) holds for all $n\geq 0$.
\end{proof}

\subsection{Congruences of partitions with parts at most $m$}

Let $m$ and $n$ be  positive integers. Define $p(n,m)$ to be the number of partitions of $n$ into parts with at most $m$. The generating function of $p(n,m)$ is given by
$$Q(q,m):=\sum_{n=0}^{\infty}{p(n,m)q^{n}}=\frac{1}{(1-q)(1-q^2)\cdots (1-q^{m})}.$$
Clearly, $Q(q,m)$ is generated by the finite set $T_{m}=\{1,2,\cdots, m\}$.  In other words, 
$$Q(q,m)=\sum_{n=0}^{\infty}{p(n;T_{m})q^{n}}.$$
Using Lemma \ref{lemmaR} for a prime $\ell$, one can see that

$$\pi_{\ell}(Q(q,\ell-1))=\ell \cdot m_{\ell}(T_{\ell-1}),$$
$$\pi_{\ell}(Q(q,\ell))=\ell^2 \cdot m_{\ell}(T_{\ell}).$$

\begin{theorem}
The following holds for all $n\geq 0$,
\begin{align}\label{p32}
p(3n+1,2)+p(3n+2,2)\equiv 0\pmod{3}
\end{align}
\begin{align}\label{p51}
p(10n+6,4)+p(10n+7,4)+p(10n+8,4)\equiv 0\pmod{5}
\end{align}
\begin{align}\label{p52}
p(10n+2,4)+p(10n+3,4)+p(10n+4,4)\equiv 0\pmod{5}.
\end{align}
\end{theorem}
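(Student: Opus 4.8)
The plan is to derive all three congruences as direct applications of Theorem \ref{thA}, exploiting the fact that each generating function $Q(q,m)=\frac{1}{(1-q)(1-q^2)\cdots(1-q^m)}$ is a \emph{finite} product and is therefore itself periodic modulo any prime power by Lemma \ref{lemmaR}. Consequently I can absorb the entire series into the periodic factor $A(q)$ and take the auxiliary factor trivially, i.e. $B(q)=1$. With $B(q)=1$ one has $\beta(0)=1$ and $\beta(m)=0$ for $m\geq 1$, so the hypotheses on $B$ in Theorem \ref{thA} (namely $\beta(0)\equiv 1$ and $\beta(m)\equiv 0\pmod{\ell^N}$ for $m\not\equiv 0\pmod\delta$) hold trivially for any choice of $\delta$. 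Then $G(q)=A(q)\cdot B(q)=Q(q,m)$, so $\lambda(k)=p(k,m)$, and each asserted congruence is exactly the congruence $\sum_i\lambda(\delta n+a_i)\equiv\sum_j\lambda(\delta n+b_j)\pmod{\ell^N}$ of the theorem, with the right-hand sum taken to be empty (so that the congruence is to $0$), exactly as in the $\equiv 0$ congruences proved earlier in the paper.

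For \eqref{p32} I would take $\ell=3$, $N=1$, and $A(q)=Q(q,2)$, which is generated by the set $T_2=\{1,2\}$. Using the formula $\pi_\ell(Q(q,\ell-1))=\ell\cdot m_\ell(T_{\ell-1})$ recorded above the theorem (equivalently Lemma \ref{lemmaR}), and noting that $m_3(T_2)$ is the $3$-free part of $\mathrm{lcm}\{1,2\}=2$, I get $\pi_3(A)=3\cdot 2=6$. Choosing $\delta=3$ (so $K=2$), the check range in Theorem \ref{thA} is $0\leq n<\pi_3(A)/\delta=2$, i.e. only $n=0$ and $n=1$. After verifying $p(3n+1,2)+p(3n+2,2)\equiv 0\pmod 3$ at these two values (a direct computation), Theorem \ref{thA} upgrades it to all $n\geq 0$.

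For \eqref{p51} and \eqref{p52} I would take $\ell=5$, $N=1$, and $A(q)=Q(q,4)$, generated by $T_4=\{1,2,3,4\}$. Here $m_5(T_4)$ is the $5$-free part of $\mathrm{lcm}\{1,2,3,4\}=12$, namely $12$, so $\pi_5(A)=5\cdot 12=60$. Because both congruences split the argument according to its residue modulo $10$, the correct choice is $\delta=10$; this is legitimate precisely because $10\mid 60$, so I may write $\pi_5(A)=\delta K$ with $K=6$, as required by the theorem. The residues occurring ($6,7,8$ in \eqref{p51} and $2,3,4$ in \eqref{p52}) all lie in $\{0,\dots,\delta-1\}$. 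The check range becomes $0\leq n<\pi_5(A)/\delta=6$, i.e. $n=0,1,\dots,5$, which I would verify by direct evaluation (e.g. in Maple) of the six relevant sums of $p(\cdot,4)$ values; Theorem \ref{thA} then gives the result for all $n\geq 0$.

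There is no deep obstacle here: the only points requiring care are (i) recognizing that $B(q)=1$ is an admissible and sufficient choice, so that $A(q)$ carries the whole generating function, and (ii) checking the divisibility $\delta\mid\pi_{\ell^N}(A)$ that lets me take $\delta=10$ against the period $60$ in the $\ell=5$ cases. The remaining effort is the finite base-case verification over the small ranges $n<2$ and $n<6$, which is routine.
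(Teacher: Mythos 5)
Your proposal is correct and is essentially identical to the paper's own proof: the paper likewise takes $A(q)=Q(q,m)$, $B(q)=1$, with $\delta=3$ (period $6$, checking $n=0,1$) for \eqref{p32} and $\delta=10$ (period $60$, checking $n=0,\dots,5$ via Maple tables) for \eqref{p51} and \eqref{p52}, then invokes Theorem \ref{thA}. Your additional remarks (admissibility of $B(q)=1$ for any $\delta$, and the divisibility $10\mid 60$ giving $K=6$) are exactly the implicit justifications the paper relies on.
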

\begin{proof}
Since $\pi_{3}(Q(q,2))=6$ and hence for $n=0,1$, we see
\begin{align*}
p(1,2)+p(2,2)=3 \equiv 0\pmod{3},\\
p(4,2)+p(5,2)=6\equiv 0\pmod{3}.
\end{align*}
Therefore, (\ref{p32}) holds for all $n \geq 0$.

For $\ell=5$, $\pi_{5}((Q(q,4))=60$. By a calculation in Maple, we verify the coefficients of $Q(q,4)$ modulo 5 in the following tables.\\

\FloatBarrier
\begin{table}[H]\label{table1}
\caption{Restricted Partitions Modulo 5 For \eqref{p51}}
\centering 
\begin{tabular}{c c c c } 
\hline\hline 
$n$ & $p(10n+6,4)$ & $p(10n+7,4)$ & $p(10n+8,4)$\\ [0.5ex] 
\hline 

0  & 4 & 1 & 0  \\ 
1  & 4 & 2 & 4  \\
2  & 1 & 0 & 4  \\
3  & 3 & 1 & 1  \\
4  & 0 & 2 & 3  \\
5  & 0 & 0 & 0  \\[1ex] 
\hline 
\end{tabular}
\label{table:nonlin} 
\end{table}

\noindent From Table 1, we note the congruence
$$p(10n+6,4)+p(10n+7,4)+p(10n+8,4)\equiv 0\pmod{5}$$
holds for all  $n=0,1,2,3,4,5.$\\

Furthermore, 

\FloatBarrier
\begin{table}[H]
\caption{Restricted Partitions Modulo 5 For \eqref{p52} } 
\centering 
\begin{tabular}{c c c c } 
\hline\hline 
$n$ & $p(10n+2,4)$ & $p(10n+3,4)$ & $p(10n+4,4)$ \\ [0.5ex] 
\hline 
0  & 2 & 3 & 0 \\ 
1  & 4 & 4 & 2 \\
2  & 1 & 0 & 4 \\
3  & 1 & 3 & 1 \\
4  & 0 & 4 & 1\\
5  & 0 & 0 & 0\\ [1ex] 
\hline 
\end{tabular}
\label{table:nonlin} 
\end{table}
Thus, we have from Table 2 for all $n=0,1,2,3,4,5$, 
$$p(10n+2,4)+p(10n+3,4)+p(10n+4,4)\equiv 0\pmod{5}.$$
By applying Theorem \ref{thA} for $A(q)=Q(q,4), B(q)=1,\delta=10$ and $N=1$ we deduce that (\ref{p51}) and (\ref{p52}) hold for all $n\geq 0.$
\end{proof}

\section{Proof of The Main Theorem}
We now present the proof of Theorem \ref{thA}.

\noindent{{\it Proof of Theorem \ref{thA}}. 
	Let $\ell$ be a prime, and $N, K, \delta$ be any positive integers. Suppose that $A(q), B(q) \in \mathbb{Z}[[q]]$ such that
	$A(q):=\sum_{n=0}^{\infty}{\alpha(n)q^{n}}$
	is periodic modulo $\ell^N$ with minimal period $\pi_{\ell^N}(A)=\delta K$ and  $B(q):=\sum_{m=0}^{\infty}{\beta(m)q^{m}}$, where $\beta(0)\equiv 1 \pmod{\ell^N}$ and $\beta(m)\equiv 0 \pmod{\ell^N}$ for $m\not\equiv 0 \pmod{\delta}.$ 
	Let
	$$G(q):=A(q)\cdot  B(q):=\sum_{k=0}^{\infty}\lambda(k) q^{k}.$$
	Since $\beta(m)\equiv 0 \pmod{\ell^N}$ for $m\not\equiv 0 \pmod{\delta},$ then
	$$B(q)\equiv\sum_{m=0}^{\infty}\beta(m\delta)q^{m\delta} \pmod{\ell^N}.$$
	Let $\beta^\prime(m):=\beta(m\delta)$ for all $m\geq 0$. Thus
	\begin{align*}
	\sum_{k=0}^{\infty} \lambda(k) q^{k} &\equiv \left(\sum_{n=0}^{\infty}{\alpha(n) q^{n}}\right)\cdot \left( \sum_{m=0}^{\infty}{\beta^\prime(m)q^{m\delta}}\right)\\
	& =\sum_{k=0}^{\infty}{\left(\sum_{i=0}^{\floor{\frac{k}{\delta}}}{\alpha(k-i\delta) \beta^\prime(i)}\right) q^k}\pmod{\ell^N} .
	\end{align*}
	Therefore, for $k\geq 0,$
	
	\begin{equation}\label{eq5}
	\lambda(k) \equiv\sum_{i=0}^{\floor{\frac{k}{\delta}}}{\alpha(k-i\delta) \beta^\prime(i)}\pmod{\ell^N}.
	\end{equation}
	Hence letting $k=n\delta+j$ in (\ref{eq5}), for  $n\geq 0$ and $0\leq j\leq \delta -1,$ we obtain
	
	\begin{equation}\label{eq6}
	\lambda(n\delta +j)\equiv \sum_{r=0}^{n}{\alpha(r\delta +j)\beta^\prime(n-r)}\pmod{\ell^N}.
	\end{equation}
	
	Notice that by (\ref{eq6}), for any $n\geq 0$, the congruence  
	\begin{equation}\label{eq7}
	\sum_{i=1}^{s}{\lambda(n\delta +a_{i})}\equiv \sum_{j=1}^{t}{\lambda(n\delta+b_{j})}\pmod{\ell^N}
	\end{equation}
	is equivalent to
	
	\begin{equation*}
	\sum_{i=1}^{s}{\sum_{r=0}^{n}{\alpha(r\delta+a_{i})\beta^\prime(n-r)}}\equiv \sum_{j=1}^{t}{\sum_{r=0}^{n}{\alpha(r\delta+b_{j})\beta^\prime(n-r)}}\pmod{\ell^N},
	\end{equation*}
	or in particular to
	
	\begin{equation*}
	\sum_{r=0}^{n}{\beta^\prime(n-r)}\left(\sum_{i=1}^{s}{\alpha(r\delta+a_{i})}\right)\equiv \sum_{r=0}^{n}{\beta^\prime(n-r)}\left(\sum_{j=1}^{t}{\alpha(r\delta+b_{j})}\right)\pmod{\ell^N}.
	\end{equation*}
	
	To prove (\ref{eq7}) holds for all $n\geq 0$, it thus suffices to prove that the congruence 
	\begin{equation}\label{eq8}
	\sum_{i=1}^{s}{\alpha(n\delta+a_{i})}\equiv \sum_{j=1}^{t}{\alpha(n\delta+b_{j})}\pmod{\ell^N}
	\end{equation}
	holds for all $n\geq 0.$
	
	By the hypothesis, (\ref{eq7}) holds for all $0\leq n < \pi_{\ell^N}(A)/\delta$. Thus for $0 \leq n < \pi_{\ell^N}(A)/\delta$, we see that
	
	\begin{equation}\label{eqAli1}
	\sum_{r=0}^{n}{\beta^\prime(n-r)}\left(\sum_{i=1}^{s}{\alpha(r\delta+a_{i})}\right)\equiv \sum_{r=0}^{n}{\beta^\prime(n-r)}\left(\sum_{j=1}^{t}{\alpha(r\delta+b_{j})}\right)\pmod{\ell^N}.
	\end{equation}
	Letting $n=0$, (\ref{eqAli1}) implies that 
	$
	\beta^\prime(0)(\sum_{i=1}^{s}{\alpha(a_{i})})\equiv \beta^\prime(0)(\sum_{j=1}^{t}{\alpha(b_{j})})\pmod{\ell^N}.
	$
	Since $\beta^\prime(0)\equiv 1 \pmod{\ell^N}$, thus $\sum_{i=1}^{s}{\alpha(a_{i})}\equiv \sum_{j=1}^{t}{\alpha(b_{j})}\pmod{\ell^N}$. For $n\geq 1$,
	\begin{multline}\label{eqAli2}
	\sum_{i=1}^{s}{\alpha(n\delta+a_{i})}+\sum_{r=0}^{n-1}{\beta^\prime(n-r)}\left(\sum_{i=1}^{s}{\alpha(r\delta+a_{i})}\right)\equiv\\
	\sum_{j=1}^{t}{\alpha(n\delta+b_{j})}+\sum_{r=0}^{n-1}{\beta^\prime(n-r)}\left(\sum_{j=1}^{t}{\alpha(r\delta+b_{j})}\right)\pmod{\ell^N}.
	\end{multline}
	We see recursively from (\ref{eqAli2}) that for all $0\leq n < \pi_{\ell^N}(A)/\delta$,
	\begin{equation*}
	\sum_{i=1}^{s}{\alpha(n\delta+a_{i})}\equiv \sum_{j=1}^{t}{\alpha(n\delta+b_{j})}\pmod{\ell^N}.
	\end{equation*}
	
	To finish the proof, it suffices to prove that (\ref{eq8}) holds for all $n\geq \pi_{\ell^N}(A)/\delta.$
	By hypothesis, there is some $K \in \mathbb{N}$ such that $\pi_{\ell^N}(A)=K\delta$. Fix an arbitrary integer $n \geq \pi_{\ell^N}(A)/\delta=K$. By the Division Algorithm, we can write $n=xK+y$ where $0\leq y<K$. Thus for each $1 \leq i \leq s$, and $1\leq j\leq t$, we have
	$$n\delta+a_{i}=x\cdot\pi_{\ell^N}(A)+(y\delta+a_{i}),$$ 
	$$n\delta+b_{j}=x\cdot\pi_{\ell^N}(A)+(y\delta+b_{j}).$$
	From this we see that
	$$n\delta+a_{i}\equiv y\delta+a_{i}\pmod{\pi_{\ell^N}(A)},$$
	$$n\delta+b_{j}\equiv y\delta+b_{j}\pmod{\pi_{\ell^N}(A)}.$$
	Since $A(q)$ is periodic modulo $\ell^N$ with minimal period $\pi_{\ell^N}(A)$, then for each $1\leq i \leq s$, and $1\leq j\leq t,$
	$$\alpha(n\delta+a_{i})\equiv \alpha(y\delta+a_{i})\pmod{\ell^N},$$
	$$\alpha(n\delta+b_{j})\equiv \alpha(y\delta+b_{j})\pmod{\ell^N}.$$
	Since  $0\leq y <  K =\pi_{\ell^N}(A)/\delta$, we have by our hypotheses that
	
	$$\sum_{i=1}^{s}{\alpha(y\delta+a_{i})}\equiv \sum_{j=1}^{t} {\alpha(y\delta+b_{j})}\pmod{\ell^N}.$$
	
	Therefore,
	
	$$\sum_{i=1}^{s}{\alpha(n\delta+a_{i})}\equiv\sum_{i=1}^{s}{\alpha(y\delta+a_{i})}\equiv\sum_{j=1}^{t} {\alpha(y\delta+b_{j})}\equiv\sum_{j=1}^{t} {\alpha(n\delta+b_{j})}\pmod{\ell^N},$$
	as desired.
	\qed
	\\

\section{Conclusion}

We have generalized the method of Mizuhara, Sellers, and Swisher \cite{periodic} to give a way to determine various congruences based on a bounded number of calculations. We note that as applications of Theorem \ref{thA}, we obtain new plane  partition and plane overpartition congruences. However, the results are limited to computing capabilities since, at least in our cases, increasing the primes leads to more involved coefficient calculations. We hope that further investigations may prove plane partition and plane overpartition congruences modulo higher primes and prime powers. In addition, it would be interesting to find examples of congruences for other types of combinatorial functions which can be proved by Theorem \ref{thA}.

\section{Acknowledgements}
I sincerely thank my advisor Professor Holly Swisher for her guidance and encouragement in carrying out this paper. The completion of this work would not have been done without her assistance and participation.
\\
\\
\bibliographystyle{plain}
\bibliography{ref}

\end{document}